\newtheorem{thm}{Theorem}
\newtheorem{crl}{Corollary}
\newtheorem{lem}{Lemma}
\begin{document}
\title{K\"ahlerian information geometry for signal processing}
\author{Jaehyung Choi}
\address{Department of Applied Mathematics and Statistics\\
SUNY, Stony Brook, NY 11794, USA}
\email{jj.jaehyung.choi@gmail.com}
\author{Andrew P. Mullhaupt}
\address{Department of Applied Mathematics and Statistics\\
SUNY, Stony Brook, NY 11794, USA}
\email{doc@zen-pharaohs.com}

\begin{abstract}
We prove the correspondence between the information geometry of a signal filter and a K\"ahler manifold. The information geometry of a minimum-phase linear system with a finite complex cepstrum norm is a K\"ahler manifold. The square of the complex cepstrum norm of the signal filter corresponds to the K\"ahler potential. The Hermitian structure of the K\"ahler manifold is explicitly emergent if and only if the impulse response function of the highest degree in $z$ is constant in model parameters. The K\"ahlerian information geometry takes advantage of more efficient calculation steps for the metric tensor and the Ricci tensor. Moreover, $\alpha$-generalization on the geometric tensors is linear in $\alpha$. It is also robust to find Bayesian predictive priors, such as superharmonic priors, because Laplace--Beltrami operators on K\"ahler manifolds are in much simpler forms than those of the non-K\"ahler manifolds. Several time series models are studied in the K\"ahlerian information geometry.
\end{abstract}

\maketitle

\section{Introduction}

Since the introduction of Riemannian geometry to statistics \cite{Rao:1945, Jeffreys:1946}, information geometry has been developed along various directions. The statistical curvature as the differential-geometric analogue of information loss and sufficiency was proposed by Efron \cite{Efron:1975}. The $\alpha$-duality of information geometry was found by Amari \cite{Amari:1990}. Not being limited to statistical inference, information geometry has become popular in many different fields, such as information-theoretic generalization of the expectation-maximization algorithm \cite{Matsuyama:2003}, hidden Markov models \cite{Matsuyama:2011}, interest rate modeling \cite{Brody:2001}, phase transition \cite{Janke:2004, Zanardi:2007} and string theory \cite{Heckman:2013}. More applications can be found in the literature \cite{Arwini:2008} and the references therein.

In particular, time series analysis and signal processing are well-known applications of information geometry. Ravishanker \textit{et al}. \cite{Ravishanker:1990p5895} found the information geometry of autoregressive moving average (ARMA) models in the coordinate system of poles and zeros. It was also extended to fractionally-integrated ARMA (ARFIMA) models \cite{Ravishanker:2001p5836}. The information geometry of autoregressive (AR) models in the reflection coefficient coordinates was also reported by Barbaresco \cite{Barbaresco:2006}. In the information-theoretic framework, Bayesian predictive priors outperforming the Jeffreys prior were derived for the AR models by Komaki \cite{Komaki:2006}.

K\"ahler manifolds are interesting topics in differential geometry. On a K\"ahler manifold, the metric tensor and the Levi--Civita connection are straightforwardly calculated from the K\"ahler potential, and the Ricci tensor is obtained from the determinant of the metric tensor. Moreover, its holonomy group is related to the unitary group. Because of these properties, many implications of K\"ahler manifolds are found in mathematics and theoretical physics. In addition to these fields, information geometry is one of those fields where the K\"ahler manifolds are intriguing. After the symplectic structure in information geometry and its connection to statistics were discovered \cite{Barndorff-Nielsen:1997}, Barbaresco \cite{Barbaresco:2006} notably introduced K\"ahler manifolds to information geometry for time series models and also generalized the differential-geometric approach with mathematical structures, such as Koszul geometry \cite{Barbaresco:2012, Barbaresco:2014}. Additionally, Zhang and Li \cite{Zhang:2013} found symplectic and K\"ahler structures in divergence functions.

In this paper, we prove that the information geometry of a signal filter with a finite complex cepstrum norm is a K\"ahler manifold. The K\"ahler potential of the geometry is the square of the Hardy norm of the logarithmic transfer function of a linear system. The Hermitian structure of the manifold is explicitly seen in the metric tensor under certain conditions on the transfer functions of linear models and filters. The calculation of geometric objects and the search for Bayesian predictive priors are simplified by exploiting the properties of K\"ahler geometry. Additionally, $\alpha $-correction terms on the geometric objects exhibit $\alpha$-linearity. This paper is structured as follows. In the next section, we shortly review information geometry for signal processing and derive basic lemmas in terms of the spectral density function and transfer function. In Section \ref{sec_kahler_theory}, main theorems for K\"ahlerian information manifolds are proven and the consequences of the theorems are provided. The implications of K\"ahler geometry to time series models are reported in Section \ref{sec_kahler_example}. We conclude the paper in Section \ref{sec_kahler_conclusion}.

\section{Information Geometry for Signal Processing}

\label{sec_kahler_background}

\subsection{Spectral Density Representation in the Frequency Domain}

We model an output signal $y( w) $ as a linear system with a transfer function $h(w;\boldsymbol{\xi })$ of model parameters $\boldsymbol{\xi }=(\xi^{1},\xi ^{2},\cdots ,\xi ^{n})$:
\begin{equation}
	y(w)=h(w;\boldsymbol{\xi })x(w)\nonumber
\end{equation}
where $x(w)$ is an input signal in frequency domain $w$. Complex inputs, outputs and model parameters are considered in this paper. The properties of a given signal filter are characterized by the transfer function $h(w;\boldsymbol{\xi })$ and the model parameters $\boldsymbol{\xi }$.

	In signal processing, one of the most important quantities is the spectral density function. The spectral density function $S(w;\boldsymbol{\xi })$ is defined as the absolute square of the transfer function: 
\begin{equation}
	\label{sdf_tf}
	S(w;\boldsymbol{\xi })=|h(w;\boldsymbol{\xi })|^{2}.
\end{equation}
The spectral density function describes the way that energy in the frequency domain is distributed by a given signal filter. In terms of signal amplitude, the spectral density function encodes an amplitude response to a monochromatic input $e^{iw}$. For example, the spectral density function of the all-pass filter is constant in the frequency domain, because the filter passes all inputs to outputs up to the phase difference regardless of frequency. The high-pass filters only allow the signals in the high-frequency domain. Meanwhile, the low-pass filters only permit low-frequency inputs. The properties of other well-known filters are also described by their specific spectral density functions. 

	The spectral density function is also important in information geometry, because the information-geometric objects of the signal processing geometry are derived from the spectral density function \cite{Amari:1987,Amari:2000}. Among the geometric objects, the length and distance concepts are most fundamental in geometry. One of the most important distance measures in information geometry is the $\alpha$-divergence, also known as Chernoff's $\alpha $-divergence, that is the only divergence which is both an $f$-divergence and a Bregman divergence \cite{Amari:2009}. The $\alpha $-divergence between two spectral density functions $S_{1}$ and $S_{2}$ is defined~as 
\begin{equation}
	D^{(\alpha )}(S_{1}||S_{2})=\left\{ 
	\begin{array}{ll}
	\frac{1}{2\pi \alpha ^{2}}\int_{-\pi }^{\pi }\big\{\Big(\frac{S_{2}}{S_{1}}\Big)^{\alpha }-1-\alpha \log {\frac{S_{2}}{S_{1}}}\big\}dw & (\alpha \neq 0)\\ 
	\frac{1}{4\pi }\int_{-\pi }^{\pi }\big(\log{S_{2}}-\log{S_{1}}\big)^{2}dw & (\alpha =0)
	\end{array}
	\right. \nonumber
\end{equation}
and the divergence conventionally measures the distance from $S_{1}$ to $S_{2}$. The $\alpha $-divergence, except for $\alpha =0$, is a pseudo-distance, because it is not symmetric under exchange between $S_1$ and $S_2$. In spite of the asymmetry, the $\alpha $-divergence is frequently used for measuring differences between two linear models or two filters. Some $\alpha$-divergences are more popular than others, because those divergences have been already known in information theory and statistics. For example, the $(-1)$-divergence is the Kullback--Leibler divergence. The 0-divergence is well known as the square of the Hellinger distance in statistics. The Hellinger distance is locally asymptotically equivalent to the information distance and globally tightly bounded by the information distance \cite{Mullhaupt:2012}.

	The metric tensor of a statistical manifold, also known as the Fisher information matrix, is derived from the $\alpha$-divergence. In order to define the information geometry of a linear system, the conditions on a signal filter are found in Amari and Nagaoka \cite{Amari:2000}: stability, minimum phase and 
	
	\begin{equation}
		\frac{1}{2\pi}\int_{-\pi}^{\pi} |\log{S(w;\boldsymbol{\xi})}|^2 dw<\infty\nonumber
	\end{equation}
	which imposes that the unweighted power cepstrum norm \cite{Bogert:1967, Martin:2000} is finite. According to the literature~\cite{Amari:1987, Amari:2000}, the metric tensor of the linear system geometry is given by

\begin{equation}
	\label{metric_sdf}
	g_{\mu\nu}(\boldsymbol{\xi })=\frac{1}{2\pi }\int_{-\pi }^{\pi }( \partial_{\mu}\log {S}) ( \partial _{\nu}\log {S}) dw
\end{equation}
where the partial derivatives are taken with respect to the model parameters $\boldsymbol{\xi }$, {\em i.e.}, $\partial_\mu=\frac{\partial}{\partial \xi^\mu}$. Since the dimension of the manifold is $n$, the metric tensor is an $n\times n$ matrix.

Other information-geometric objects are also determined by the spectral density function. The $\alpha $-connection, which encodes the change of a vector being parallel-transported along a curve, is expressed with
\begin{equation}
	\label{connection_sdf}
	\Gamma _{\mu\nu,\rho}^{(\alpha )}(\boldsymbol{\xi })=\frac{1}{2\pi }\int_{-\pi}^{\pi }(\partial _{\mu}\partial _{\nu}\log {S}-\alpha ( \partial _{\mu}\log{S}) ( \partial _{\nu}\log {S}) )(\partial_{\rho}\log {S})dw
\end{equation}
where $\alpha $ is a real number. Notice that the $\alpha $-connection is not a tensor. The $\alpha $-connection is related to the Levi--Civita connection, $\Gamma _{\mu\nu,\rho}(\boldsymbol{\xi })$, also known as the metric connection. The relation is given by the following equations: 
\begin{align}
	\Gamma _{\mu\nu,\rho}^{(\alpha )}(\boldsymbol{\xi })&=\Gamma _{\mu\nu,\rho}(\boldsymbol{\xi})-\frac{\alpha }{2}T_{\mu\nu,\rho}(\boldsymbol{\xi })
	\label{alpha_connection_t} \\
	T_{\mu\nu,\rho}(\boldsymbol{\xi })&=\frac{1}{\pi }\int_{-\pi }^{\pi }(\partial _{\mu}\log {S}) ( \partial _{\nu}\log {S}) (\partial_{\rho}\log {S})dw
\end{align}
where the tensor $T$ is symmetric under the exchange of the indices. The Levi--Civita connection corresponds to the $\alpha =0$ case.

These information-geometric objects have interesting properties with the reciprocality of spectral density functions. The spectral density function of an inverse system is the reciprocal spectral density function of the original system. The geometric properties of the inverse system are described by the $\alpha $-dual description. The following lemma shows the correspondence between the reciprocality of the spectral density function and the $\alpha$-duality.
\begin{lem}
\label{lem_sdf_inv} The information geometry of an inverse system is the $\alpha $-dual geometry to the information geometry of the original system.
\end{lem}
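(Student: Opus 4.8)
The plan is to exploit the elementary fact that passing to the inverse system, $h\mapsto h^{-1}$, sends the spectral density function to its reciprocal, $S\mapsto 1/S$, so that the log-spectral density merely changes sign: $\log(1/S)=-\log S$ and hence $\partial_\mu\log(1/S)=-\partial_\mu\log S$ for every model parameter $\xi^\mu$. First I would check that the inverse system is again an admissible filter in the sense of Amari and Nagaoka, so that the statement is not vacuous: a minimum-phase transfer function has all of its poles and zeros inside the unit circle, so the inverse — which only interchanges poles and zeros — is again stable and minimum phase, and the cepstrum-norm condition $\frac{1}{2\pi}\int_{-\pi}^{\pi}|\log S|^2\,dw<\infty$ is preserved verbatim because $|\log(1/S)|^2=|\log S|^2$. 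Thus the inverse system carries a well-defined information geometry, and it remains only to identify it.

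Next I would substitute $\partial_\mu\log(1/S)=-\partial_\mu\log S$ into the defining integral \eqref{metric_sdf} for the metric tensor. The two factors of $-1$ cancel, so the metric of the inverse system coincides with the metric $g_{\mu\nu}(\boldsymbol{\xi})$ of the original system; this is precisely the behavior demanded of an $\alpha$-dual pair, whose two dual connections are compatible with one and the same Riemannian metric.

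The substance of the lemma is the statement about connections. Substituting into \eqref{connection_sdf} I obtain, for the inverse system,
\begin{align}
\Gamma^{(\alpha)}_{\mu\nu,\rho}\big|_{1/S} &= \frac{1}{2\pi}\int_{-\pi}^{\pi}\big(\partial_\mu\partial_\nu\log(1/S) - \alpha(\partial_\mu\log(1/S))(\partial_\nu\log(1/S))\big)(\partial_\rho\log(1/S))\,dw \nonumber\\
&= \frac{1}{2\pi}\int_{-\pi}^{\pi}\big(-\partial_\mu\partial_\nu\log S - \alpha(\partial_\mu\log S)(\partial_\nu\log S)\big)(-\partial_\rho\log S)\,dw \nonumber\\
&= \frac{1}{2\pi}\int_{-\pi}^{\pi}\big(\partial_\mu\partial_\nu\log S - (-\alpha)(\partial_\mu\log S)(\partial_\nu\log S)\big)(\partial_\rho\log S)\,dw = \Gamma^{(-\alpha)}_{\mu\nu,\rho}(\boldsymbol{\xi}),
\end{align}
the key point being the bookkeeping of signs: the second-derivative term and the cubic term each acquire one overall sign from the outermost factor $\partial_\rho\log(1/S)$, while the cubic term acquires two further signs from its remaining factors, so the net effect is that the coefficient of the cubic term alone flips from $\alpha$ to $-\alpha$. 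Equivalently, one can note from $T_{\mu\nu,\rho}\big|_{1/S}=-T_{\mu\nu,\rho}(\boldsymbol{\xi})$ (by skew-cubic symmetry) and $\Gamma_{\mu\nu,\rho}\big|_{1/S}=\Gamma_{\mu\nu,\rho}(\boldsymbol{\xi})$ (the $\alpha=0$ case), together with \eqref{alpha_connection_t}, that $\Gamma^{(\alpha)}_{\mu\nu,\rho}\big|_{1/S}=\Gamma_{\mu\nu,\rho}+\tfrac{\alpha}{2}T_{\mu\nu,\rho}=\Gamma^{(-\alpha)}_{\mu\nu,\rho}$.

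Finally I would assemble the two computations: the inverse system has the same metric tensor, and its $\alpha$-connection equals the $(-\alpha)$-connection of the original system, which is exactly the definition of the $\alpha$-dual geometry. There is no serious obstacle here; the only points requiring care are (i) checking admissibility of the inverse filter, so that both geometries exist, and (ii) the sign accounting in the cubic term of the $\alpha$-connection, which is the single place where a careless calculation would go wrong.
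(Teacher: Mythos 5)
Your proof is correct and follows essentially the same route as the paper: both arguments rest on the sign flip $\log S^{-1}=-\log S$, which leaves the metric integral \eqref{metric_sdf} invariant and turns the $\alpha$-connection \eqref{connection_sdf} into the $(-\alpha)$-connection. Your added admissibility check and the alternative derivation via $T_{\mu\nu,\rho}\mapsto -T_{\mu\nu,\rho}$ in \eqref{alpha_connection_t} are harmless refinements of the same computation.
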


\begin{proof}
The metric tensor is invariant under the reciprocality of spectral density functions, {\em i.e.}, plugging $S^{-1}$ into Equation (\ref{metric_sdf}) provides the identical metric tensor. 

	Meanwhile, the $\alpha$-connection is not invariant under the reciprocality and exhibits a more interesting property. The $\alpha $-connection from the reciprocal spectral density function is given by
\begin{align}
	\Gamma _{\mu\nu,\rho}^{(\alpha )}(S^{-1};\boldsymbol{\xi }) &=\frac{1}{2\pi }\int_{-\pi }^{\pi }(\partial _{\mu}\partial _{\nu}\log {S}+\alpha (\partial_{\mu}\log {S})(\partial _{\nu}\log {S}))(\partial _{\rho}\log {S})dw \nonumber\\
	&=\Gamma _{\mu\nu,\rho}^{(-\alpha )}(S;\boldsymbol{\xi }) \nonumber
\end{align}
and the above equation shows that the $\alpha $-connection induced by the reciprocal spectral density function corresponds to the $( -\alpha ) $-connection of the original geometry. 

Similar to the $\alpha$-connection, the $\alpha$-divergence is equipped with the same property. The $\alpha $-divergence between two reciprocal spectral density functions is straightforwardly found from the definition of the $\alpha$-divergence, and it is represented by the $( -\alpha ) $-divergence between the two spectral density functions: 
\begin{eqnarray}
D^{(\alpha )}(S_{1}^{-1}||S_{2}^{-1})=D^{(-\alpha )}(S_{1}||S_{2}).\nonumber
\end{eqnarray}
	Using the inverse systems, we can construct the $\alpha$-dual description of signal processing models in information geometry. The multiplicative inverse of a spectral density function corresponds to the $\alpha$-duality of the geometry.
\end{proof}

Lemma \ref{lem_sdf_inv} indicates that given a linear system geometry, there is no way to discern whether the metric tensor is derived from the filters with $S$ or $S^{-1}$. Additionally, the model $S^{-1}$ is $( -\alpha ) $-flat if and only if $S$ is $\alpha $-flat. The 0-connection is self-dual under the reciprocality. A consequence of Lemma \ref{lem_sdf_inv} is the following multiplication rule: 
\begin{align}
	D^{(\alpha )}(S_{1}||S_{2}^{-1}) &=\frac{1}{2\pi \alpha ^{2}}\int_{-\pi
}^{\pi }\big\{(S_{1}S_{2})^{-\alpha }-1+\alpha \log {(S_{1}S_{2})}\big\}dw 
\nonumber \\
	&=D^{(-\alpha )}(S_{0}||S_{1}S_{2})=D^{(\alpha )}(S_{1}S_{2}||S_{0}) \nonumber
\end{align}
where $S_{0}$ is the unit spectral density function of the all-pass filter. Plugging $S_{1}=S_{0}$ and $S_{2}=S$, we have $D^{(0)}(S_{0}||S^{-1})=D^{(0)}(S_{0}||S)=D^{(0)}(S||S_{0})$.

	We observe that the bilateral transfer functions $\log |h(e^{iw};\boldsymbol{\xi })|^{2}\in L^{2}\left( \mathbb{T}\right) $ are isomorphically embedded in the space $\mathbb{R}\oplus zH^{2}\left( \mathbb{D}\right)$.
\begin{lem}
	\label{Wiener-Khinchin factors} 
	Let $\log \left\vert h(e^{iw};\boldsymbol{\xi })\right\vert ^{2}\in L^{2}\left( \mathbb{T}\right)$. Then, there is an analytic function $f\in $ $\exp \left( H^{2}\left( \mathbb{D}\right) \right)$, such that
\begin{equation}
	\left\vert h(e^{iw};\boldsymbol{\xi })\right\vert ^{2}=\left\vert f(e^{iw};\boldsymbol{\xi })\right\vert ^{2}\nonumber
\end{equation}
and
\begin{equation}
	\left\Vert \log \left\vert h(e^{iw};\boldsymbol{\xi })\right\vert ^{2}-\log\left\vert h(1;\boldsymbol{\xi })\right\vert ^{2}\right\Vert _{L^{2}\left( \mathbb{T}\right) }=\left\Vert \log \left\vert f(e^{iw};\boldsymbol{\xi })\right\vert^{2}-\log \left\vert f(1;\boldsymbol{\xi })\right\vert ^{2}\right\Vert _{H^{2}\left( \mathbb{D}\right) }.\nonumber
\end{equation}
This has the interpretation that the information manifold of $\log \left\vert h(e^{iw};\boldsymbol{\xi })\right\vert ^{2}\in L^{2}$ is isometric to the Hardy--Hilbert space.
\end{lem}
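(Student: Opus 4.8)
The plan is to construct $f$ by the classical spectral (Szeg\H{o}) factorization of $|h|^{2}$ and then to read off the norm identity from Parseval's theorem, the reflection symmetry of the power-cepstrum coefficients being exactly what turns the $L^{2}(\mathbb{T})$ inner product into the Hardy one.

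First I would set $u(w):=\log|h(e^{iw};\boldsymbol{\xi})|^{2}$. By hypothesis $u$ is a real-valued element of $L^{2}(\mathbb{T})\subset L^{1}(\mathbb{T})$, so Szeg\H{o}'s condition holds automatically. Expanding $u(w)=\sum_{k\in\mathbb{Z}}c_{k}e^{ikw}$ with $(c_{k})\in\ell^{2}$ and, by reality of $u$, $c_{-k}=\overline{c_{k}}$, I would introduce the analytic symbol
\begin{equation}
\phi(z):=\tfrac{1}{2}c_{0}+\sum_{k\geq 1}c_{k}z^{k},\nonumber
\end{equation}
namely one half of the Riesz projection of $u$; it lies in $H^{2}(\mathbb{D})$ precisely because $(c_{k})_{k\geq 0}\in\ell^{2}$. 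Setting $f:=\exp\phi$ gives $f\in\exp(H^{2}(\mathbb{D}))$, and, using Fatou's theorem together with the $L^{2}(\mathbb{T})$-boundedness of the conjugation operator, the non-tangential boundary values satisfy $\phi(e^{iw})+\overline{\phi(e^{iw})}=u(w)$ almost everywhere, whence
\begin{equation}
|f(e^{iw};\boldsymbol{\xi})|^{2}=e^{\,2\operatorname{Re}\phi(e^{iw})}=e^{\,u(w)}=|h(e^{iw};\boldsymbol{\xi})|^{2}\qquad\text{a.e.}\nonumber
\end{equation}
Thus $f$ is the minimum-phase (outer) spectral factor of $|h|^{2}$, which is the first display of the lemma; evaluating at $z=1$ (as a non-tangential limit, or as a genuine value in the rational case) also gives $\log|f(1;\boldsymbol{\xi})|^{2}=\log|h(1;\boldsymbol{\xi})|^{2}$, so the two reference terms subtracted in the norm identity agree.

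For the norm identity I would then argue by Parseval. Because $\log|f(e^{iw})|^{2}=u(w)$ on $\mathbb{T}$ and the subtracted constants coincide, the two bracketed expressions are the same function on the circle; it is represented on the one side by the two-sided cepstrum sequence $(c_{k})_{k\neq 0}$ with the norm of $L^{2}(\mathbb{T})$, and on the other, via $\phi-\phi(1)\in zH^{2}(\mathbb{D})$, by the one-sided (complex-cepstrum) sequence $(c_{k})_{k\geq 1}$ with the Hardy norm. The reflection $c_{-k}=\overline{c_{k}}$ identifies these two $\ell^{2}$ data isometrically once $zH^{2}(\mathbb{D})$ carries the inner product that doubles the weight of the positive-frequency modes; this is exactly the normalization implicit in the embedding $L^{2}(\mathbb{T})\hookrightarrow\mathbb{R}\oplus zH^{2}(\mathbb{D})$ recorded above the lemma, the constant mode going to the $\mathbb{R}$-summand (annihilated by the subtraction) and the remaining modes going isometrically into $zH^{2}(\mathbb{D})$. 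Since by Equation~(\ref{metric_sdf}) the Fisher metric is the pullback to parameter space of the $L^{2}(\mathbb{T})$ inner product of $\partial_{\mu}u$, and $\partial_{\mu}u$ inherits the same reflection symmetry, the metric is simultaneously the pullback of the Hardy inner product of $\partial_{\mu}\phi$; this is the asserted isometry of the information manifold onto its image in the Hardy--Hilbert space.

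I expect the main obstacle to be the functional-analytic care rather than any computation. Since $u$ is only assumed in $L^{2}(\mathbb{T})$, the identity $|h(e^{iw})|^{2}=|f(e^{iw})|^{2}$ and the value $f(1;\boldsymbol{\xi})$ must be handled through non-tangential limits (Fatou) and through the fact that $\exp$ of an $H^{2}(\mathbb{D})$ function has boundary modulus equal to the exponential of the real part of its boundary value; it is precisely here that the hypothesis $u\in L^{2}$, rather than merely $u\in L^{1}$, is needed, so that the harmonic conjugate of $u$ is again square-integrable and $\phi\in H^{2}(\mathbb{D})$. The secondary point requiring attention is fixing the normalization of the Hardy inner product so that the passage from the two-sided to the one-sided sequence is a true isometry and not merely a bounded isomorphism; with the bare power-series norm one obtains only a similarity, which is why the statement is naturally phrased relative to the embedding into $\mathbb{R}\oplus zH^{2}(\mathbb{D})$ introduced just above.
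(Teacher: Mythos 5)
Your construction is essentially the paper's own: the analytic symbol $\phi=\tfrac12 c_0+\sum_{k\ge1}c_kz^k$ is exactly the conjugate-series completion $a_0+a(z)+i\tilde a(z)$ (up to the constant) used in the paper, $f=\exp\phi$ is the same outer/Szeg\H{o} factor, and the norm identity is read off from Parseval together with the reflection symmetry $c_{-k}=\overline{c_k}$, just as the paper does via $\sum_{r\neq0}|a_r|^2=\sum_{r\neq0}|\tilde a_r|^2$. Your version is in fact slightly more careful than the paper's on two points it elides --- the use of Fatou's theorem and the $L^2$-boundedness of the conjugation operator to justify the boundary identities, and the factor-of-two normalization needed for the two-sided-to-one-sided correspondence to be a genuine isometry into $\mathbb{R}\oplus zH^2(\mathbb{D})$.
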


\begin{proof}
$\log h(e^{iw};\boldsymbol{\xi })$ is represented by the Fourier series:

\begin{equation}
	\log \left\vert h(e^{iw};\boldsymbol{\xi })\right\vert ^{2}=\sum_{r=-\infty}^{\infty }a_{r}e^{irw}\nonumber
\end{equation}
and since $\log \left\vert h(e^{iw};\boldsymbol{\xi })\right\vert ^{2}$ is real, we have $a_{-r}=\bar{a}_{r}$, and in particular, $a_{0}$ is real. We define the conjugate series by the coefficients $\tilde{a}_{r}$, so that $a_{r}+i\tilde{a}_{r}=0$ for $r<0$ and $\tilde{a}_{r}$ for $r>0$; so that $\tilde{a}\left( e^{i\theta }\right) $ is real valued, we choose $\tilde{a}_{0}=0$. This implies
\begin{equation}
	\tilde{a}_{r}=\left\{ 
	\begin{array}{ll}
	-\frac{1}{i}a_{r} & (r<0)\\ 
	\frac{1}{i}a_{r} & (r>0)
	\end{array}
	\right. \nonumber
\end{equation}
and if $\left\{ a_{r}\right\} \in l^{p}$ for $1\leq p\leq \infty $, then $\left\{ \tilde{a}_{r}\right\} \in l^{p}$, in particular,
\begin{equation}
	\sum_{r\neq 0}\left\vert a_{r}\right\vert ^{2}=\sum_{r\neq 0}\left\vert \tilde{a}_{r}\right\vert ^{2}.
\end{equation}
The analytic function $f\left( z\right) =\exp \left( a_{0}+a\left( z\right)+i\tilde{a}\left( z\right) \right) $ has
\begin{equation}
	\log \left\vert h(e^{iw};\boldsymbol{\xi })\right\vert ^{2}=\log \left\vert f(e^{iw};\boldsymbol{\xi })\right\vert ^{2}\nonumber
\end{equation}
and
\begin{equation}
	\|\log f(z;\boldsymbol{\xi })-\log f\left( 1;\boldsymbol{\xi }\right) \|_{H^{2}}^{2}=\left\Vert \log \left\vert h(e^{iw};\boldsymbol{\xi })\right\vert ^{2}-\log \left\vert h(1;\boldsymbol{\xi })\right\vert ^{2}\right\Vert^2_{L^{2}\left( \mathbb{T}\right) }<\infty \nonumber 
\end{equation}
and because $f\in \exp \left( zH^{2}\left( \mathbb{D}\right) \right) $, $f$ (and $f^{-1}$) is outer, we may write
\begin{equation}
	h(e^{iw};\boldsymbol{\xi })=u(e^{iw};\boldsymbol{\xi })f(e^{iw};\boldsymbol{\xi }) \nonumber
\end{equation}
where $\log u(e^{iw};\boldsymbol{\xi })\in L^{2}$ is pure imaginary, that is, $\left\vert u(e^{iw};\boldsymbol{\xi })\right\vert =1$.
\end{proof}

This has the interpretation that $h$ has a well-defined outer factor, and the information geometry of $h$ depends only on $h$. In the case that the power series coefficients $a_{k}\left( \boldsymbol{\xi }\right) $ are continuous, smooth, analytic, \textit{etc}., then the embedding is likewise smooth.
%%%%%%%

\subsection{Transfer Function Representation in the $z$ Domain}

By using transfer functions, it is also possible to reproduce all of the previous results with the spectral density function. With Fourier transformation and $Z$-transformation, $z=e^{iw}$, a transfer function $h(z;\boldsymbol{\xi })$ is expressed with a series expansion of $z$,
\begin{equation}
	h(z;\boldsymbol{\xi })=\sum_{r=-\infty }^{\infty }h_{r}(\boldsymbol{\xi })z^{-r}
\end{equation}
where $h_{r}(\boldsymbol{\xi })$ is an impulse response function. It is a bilateral (or two-sided) transfer function expression, which has both positive and negative degrees in $z$, including the zero-th degree. In the causal response case that $h_{r}(\boldsymbol{\xi })=0$ for all negative $r$, the transfer function is unilateral. In many applications, the main concern is the causality of linear filters, which is represented by unilateral transfer functions. In this paper, we start with bilateral transfer functions as generalization and then will focus on causal filters.

In the complex $z$-domain, all formulae for the information-geometric objects are identical to the expressions in the frequency domain, except for the change of the integral measure:
\begin{equation}
	\frac{1}{2\pi }\int_{-\pi }^{\pi }G(e^{iw};\boldsymbol{\xi })dw\rightarrow \frac{1}{2\pi i}\oint_{|z|=1}G(z;\boldsymbol{\xi })\frac{dz}{z}\nonumber
\end{equation}
for an arbitrary integrand $G$. Since the evaluation of the integration is obtained from the line integral along the unit circle on the complex plane, it is easy to calculate the above integration with the aid of the residue theorem. According to the residue theorem, the poles only inside the unit circle contribute to the value of the integration. If $G(z;\boldsymbol{\xi})$ is analytic on the unit disk, the constant term in $z$ of $G(z;\boldsymbol{\xi})$ is the value of the integration. For more details, see Cima \textit{et al}. \cite{Cima:2006} and the references therein.

One advantage of using $Z$-transformation is that a transfer function can be understood in the framework of functional analysis. A transfer function defined on the complex plane is expanded by the orthonormal basis $z^{-r}$ for integers $r$ with impulse response functions as the coefficients. In functional analysis, it is possible to define the inner product between two complex functions $F$ and $G$ in the Hilbert space:
\begin{equation}
	\langle F,G\rangle =\frac{1}{2\pi i}\oint_{|z|=1}F(z)\overline{G(z)}\frac{dz}{z}.\nonumber
\end{equation}
By using this inner product, the condition for the stationarity, $\sum_{r=0}^{\infty }|h_{r}|^{2}<\infty $, is written as the Hardy norm ($H^{2}$-norm) in complex functional analysis,
\begin{equation}
	\|h(z;\boldsymbol{\xi })\|_{H^{2}}^{2}=\langle h(z;\boldsymbol{\xi }),h(z;\boldsymbol{\xi })\rangle=\sum_{r=0}^{\infty }|h_{r}|^{2}<\infty. \nonumber
\end{equation}
Since the functional space with a finite Hardy norm is called the Hardy--Hilbert space $H^{2}$, the unilateral transfer functions satisfying the stationarity condition live on the $H^2$-space. A transfer function of a stationary system is a function in the $L^2$-space if the transfer function is in the bilateral form. 

	The conditions on the transfer function of a signal filter are also necessary for defining the information geometry of a linear system in terms of the transfer function. Similar to the spectral density representation, the conditions on the linear filters are stability and minimum phase. In addition to these conditions, we also need the following condition on the finite $H^2$-norm of the logarithmic transfer function,
	\begin{equation}
		\frac{1}{2\pi i}\oint_{|z|=1} |\log{h(z;\boldsymbol{\xi})}|^2\frac{dz}{z}<\infty\nonumber
	\end{equation}
	and for the above condition, it is also known that the unweighted complex cepstrum norm \cite{Oppenheim:1965, Martin:2000} is finite. From now on, signal filters in this paper are the linear systems satisfying the above norm conditions. This is a necessary condition for a finite power cepstrum norm.
	
	It is natural to complexify the coordinate system as being used in the complex differential geometry. In holomorphic and anti-holomorphic coordinates, the metric tensor of a linear system geometry is represented by
\begin{equation}
	g_{\mu \nu }=\frac{1}{2\pi i}\oint_{|z|=1}\partial _{\mu }\big(\log {h(z;\boldsymbol{\xi })}+\log {\bar{h}(\bar{z};\bar{\boldsymbol{\xi }})}\big)\partial _{\nu}\big(\log {h(z;\boldsymbol{\xi })}+\log {\bar{h}(\bar{z};\bar{\boldsymbol{\xi }}})\big)\frac{dz}{z}\nonumber
\end{equation}
where both $\mu $ and $\nu $ run over all holomorphic and anti-holomorphic coordinates, {\em i.e.}, $\mu ,\nu =1,2,\cdots ,n,\bar{1},\bar{2},\cdots ,\bar{n}$. 

	The components of the metric tensor are categorized into two classes: one with pure indices from holomorphic coordinates and anti-holomorphic coordinates, and another with the mixed indices. The metric tensor components in these categories are given by
\begin{eqnarray} 
	\label{metric_complex_coordinate1}
	g_{ij}(\boldsymbol{\xi})=\frac{1}{2\pi i}\oint_{|z|=1} \partial_i \log{h(z;\boldsymbol{\xi})} \partial_j \log{h(z;\boldsymbol{\xi})} \frac{dz}{z} \\
	\label{metric_complex_coordinate2}
	g_{i\bar{j}}(\boldsymbol{\xi})=\frac{1}{2\pi i}\oint_{|z|=1} \partial_i \log{h(z;\boldsymbol{\xi})} \partial_{\bar{j}} \log{\bar{h}(\bar{z};\bar{\boldsymbol{\xi}})} \frac{dz}{z}
\end{eqnarray}
where $g_{\bar{i}\bar{j}}=(g_{ij})^{*}$ and $g_{\bar{i}j}=(g_{i\bar{j}})^{*}$, and the indices $i$ and $j$ run from one to $n$. It is also possible to express the $\alpha$-connection and the $\alpha$-divergence in terms of the transfer function by using Equation (\ref{sdf_tf}), the relation between the transfer function and the spectral density function.

	It is noteworthy that the information geometry of a linear system is invariant under the multiplicative factor of $z$ in the transfer function, because the metric tensor is not changed by the factorization. The invariance is also true for the geometry induced by the spectral density function.
\begin{lem}
	\label{lem_transfer_factorization} 
	The information geometry of a signal filter is invariant under the multiplicative factor of $z$. 
\end{lem}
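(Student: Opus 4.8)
\textit{Proof plan.} The strategy is to reduce the statement to the elementary fact that multiplying the transfer function by a power of $z$ changes $\log h$ only by a term that carries \emph{no} dependence on the model parameters $\boldsymbol{\xi}$, hence disappears under every parameter derivative occurring in the information-geometric objects. Write the factorized filter as $h'(z;\boldsymbol{\xi})=z^{k}h(z;\boldsymbol{\xi})$ for a fixed integer $k$ (more generally $c\,z^{k}$ with $c$ and $k$ independent of $\boldsymbol{\xi}$). I would give two independent routes. On the spectral side, for $|z|=1$ one has $S'(w;\boldsymbol{\xi})=|h'|^{2}=|z|^{2k}|h|^{2}=S(w;\boldsymbol{\xi})$, so the spectral density function is literally unchanged on the integration contour; consequently the metric tensor of Equation (\ref{metric_sdf}), the $\alpha$-connection of Equation (\ref{connection_sdf}), the tensor $T$ and the decomposition (\ref{alpha_connection_t}) are all unchanged, which already settles the spectral-density formulation. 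On the transfer-function side, $\log h'(z;\boldsymbol{\xi})=k\log z+\log h(z;\boldsymbol{\xi})$, where $k\log z$ does not involve $\boldsymbol{\xi}$.

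Then I would substitute this into the $z$-domain formulas. Since $\partial_{\mu}(k\log z)=0$ and $\partial_{\mu}\partial_{\nu}(k\log z)=0$, the integrands of $g_{ij}$ and $g_{i\bar{j}}$ in Equations (\ref{metric_complex_coordinate1})--(\ref{metric_complex_coordinate2}), of the $\alpha$-connection, and of the symmetric tensor $T$ are pointwise identical for $h'$ and $h$; the components $g_{\bar{i}\bar{j}}$ and $g_{\bar{i}j}$ follow by conjugation. Because the metric tensor is unchanged, so are its determinant, the Levi--Civita connection and the Ricci tensor. For the $\alpha$-divergence one observes that $S'_{2}/S'_{1}=S_{2}/S_{1}$ as rational functions of $z$ (the $z^{k}\bar{z}^{k}$ factors cancel identically, not merely on $|z|=1$), so $D^{(\alpha)}(S'_{1}\|S'_{2})=D^{(\alpha)}(S_{1}\|S_{2})$. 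Hence the entire information geometry --- metric, $\alpha$-connections, curvature and divergence --- coincides for $h$ and $z^{k}h$.

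The only point needing care is the meaning of ``multiplicative factor of $z$'': the argument works precisely because the monomial factor is the \emph{same} power for all parameter values, and it is this parameter-independence, not any analytic property of $z^{k}h$, that does the work. In particular the conclusion does not require $z^{k}h$ to remain causal, stable or minimum phase --- the geometric formulas are contour integrals over $|z|=1$ and the invariance is an identity at the level of integrands. If one prefers the Hardy-space picture of Lemma \ref{Wiener-Khinchin factors}, the statement is equivalent to the remark following it: $z^{k}$ is unimodular on $\mathbb{T}$ (an inner factor), so $h$ and $z^{k}h$ share the same outer factor and therefore determine the same information manifold. I do not expect a genuine obstacle; the content of Lemma \ref{lem_transfer_factorization} is that the shift ambiguity in the indexing of the impulse-response coefficients is invisible to information geometry, and the proof amounts to the one-line derivative computation above together with bookkeeping over the list of geometric objects.
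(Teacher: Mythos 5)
Your proposal is correct and follows essentially the same two-pronged argument as the paper: the spectral density is unchanged because $|z|^{2k}=1$ on the unit circle, and on the transfer-function side the added term $k\log z$ is annihilated by the parameter derivatives, so all integrands defining the metric, connection and divergence are pointwise unchanged. The extra remarks (parameter-independence of $k$ as the operative fact, and the inner-factor viewpoint) are consistent refinements but do not change the route.
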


\begin{proof}
Any transfer function can be factored $z^R$ out in the form of 
\begin{equation}
	h(z;\boldsymbol{\xi})=z^R\tilde{h}(z;\boldsymbol{\xi}) \nonumber
\end{equation}
where $R$ is an integer and $\tilde{h}$ is the factored-out transfer function. In the spectral density function representation, the contribution of the factorization is $|z|^{2R}$, and it is a unity in the line integration. It imposes that the metric tensor, the $\alpha$-connection and the $\alpha$-divergence are independent of the factorization.

When a transfer function is considered, the same conclusion is obtained. Since the contribution from the factorization parts, $\log{z^R}$, is canceled by the partial derivatives in the metric tensor and the $\alpha$-connection expression, the geometry is invariant under the factorization. It is also easy to show that $\alpha$-divergence is also not changed by the factorization. Another explanation is that the terms of $\partial_i h/h$ in the metric tensor and the $\alpha$-connection are invariant under $z^R$-scaling.
\end{proof}

Based on Lemma \ref{lem_transfer_factorization}, it is possible to obtain the unilateral transfer function from a transfer function with a finite upper bound in degrees of $z$. In particular, this factorization invariance of the geometry is useful in the case that the transfer function has a finite number of terms in the non-causal direction of the bilateral transfer function. If the highest degree in $z$ of the transfer function is finite, the transfer function is factored out as
\begin{align}
	h(z;\boldsymbol{\xi }) &=z^{R}(h_{-R}+h_{-(R-1)}z^{-1}+\cdots) \nonumber \\
	&=z^{R}\tilde{h}(z;\boldsymbol{\xi }) \nonumber
\end{align}
where $R$ is the maximum degree in $z$ of the transfer function and $\tilde{h}$ is a unilateral transfer function.

A bilateral transfer function can be expressed with the multiplication of a unilateral transfer function $f(z;\boldsymbol{\xi })$ and an analytic function $a(z;\boldsymbol{\xi })$ on the disk: 
\begin{align}
	h(z;\boldsymbol{\xi }) &=f(z;\boldsymbol{\xi })a(z;\boldsymbol{\xi }) \nonumber \\
	&=(f_{0}+f_{1}z^{-1}+f_{2}z^{-2}+\cdots)(a_{0}+a_{1}z^{1}+a_{2}z^{2}+\cdots )\nonumber
\end{align}
where $f_{r}$ and $a_{r}$ are functions of $\boldsymbol{\xi }$. For a causal filter, all $a_{i}$'s are zero, except for $a_{0}$. This decomposition also includes the case of Lemma \ref{lem_transfer_factorization} by setting $a_{i}=0$ for $i<R$ and $a_{R}=1$. However, it is natural to take $f_{0}$ and $a_{0}$ as non-zero functions of $\boldsymbol{\xi}$. This is because powers of $z$ could be factored out for non-zero coefficient terms with the maximum degree in $f(z;\boldsymbol{\xi })$ and the minimum degree in $a(z;\boldsymbol{\xi })$, and the transfer function is reducible to 
\begin{equation}
	h(z;\boldsymbol{\xi })=z^{R}\tilde{h}(z;\boldsymbol{\xi })\nonumber
\end{equation}
where $\tilde{h}(z;\boldsymbol{\xi })$ has non-zero $\tilde{f}_{0}$ and $\tilde{a}_{0}$ and $R$ is an integer, which is the sum of the degrees in $z$ with the first non-zero coefficient terms from $f(z;\boldsymbol{\xi })$ and $a(z;\boldsymbol{\xi })$, respectively. By Lemma \ref{lem_transfer_factorization}, the information geometry of the linear system with the transfer function $h(z;\boldsymbol{\xi })$ is the same as the geometry induced by the factored-out transfer function $\tilde{h}(z;\boldsymbol{\xi })$.

The relation between $f(z;\boldsymbol{\xi}), a(z;\boldsymbol{\xi})$ and $h(z;\boldsymbol{\xi})$ is described by the following Toeplitz system: 
\begin{equation}
\begin{pmatrix}
h_0 & h_1 & h_2 &\ldots\\
h_{-1} & h_0 & h_1 & \ldots\\
h_{-2} & h_{-1} & h_0 & \ldots\\
\vdots & \vdots & \vdots & \ddots
\end{pmatrix}
=\begin{pmatrix}
f_0 & f_1 & f_2 &\ldots\\
0 & f_0 & f_1 & \ldots\\
0 & 0 & f_0 & \ldots\\
\vdots & \vdots & \vdots & \ddots
\end{pmatrix}
\begin{pmatrix}
a_0 & 0 & 0 &\ldots\\
a_1 & a_0 & 0 & \ldots\\
a_2 & a_1 & a_0 & \ldots\\
\vdots & \vdots & \vdots & \ddots
\end{pmatrix}.\nonumber
\end{equation}
For a given $h(z;\boldsymbol{\xi})$, $f_r$ is determined by the coefficients of $a(z;\boldsymbol{\xi})$, {\em i.e.}, if we choose $a(z;\boldsymbol{\xi})$, $f(z;\boldsymbol{\xi})$ is conformable to the choice under the above Toeplitz system. The following lemma is noteworthy for further discussions. It is the generalization of Lemma \ref{lem_transfer_factorization}.
\begin{lem}
\label{lem_gauge} The information geometry of a signal filter is invariant under the choice of $a(z;\boldsymbol{\xi})$.
\end{lem}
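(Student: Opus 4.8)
The plan is to reduce the statement to the single observation that \emph{every} information-geometric object attached to a filter — the metric tensor, the $\alpha$-connection, and the $\alpha$-divergence — is a functional of the spectral density function $S=|h(z;\boldsymbol{\xi})|^{2}$ alone, equivalently of $\log S=\log h+\log\bar h$ alone. This is manifest in Equations (\ref{metric_sdf}) and (\ref{connection_sdf}) and in the definition of $D^{(\alpha)}$, and it is also true of the complex-coordinate forms (\ref{metric_complex_coordinate1})--(\ref{metric_complex_coordinate2}), which are obtained from the $\log S$ expression merely by separating positive and negative powers of $z$ and applying the residue theorem. Now, the decomposition $h(z;\boldsymbol{\xi})=f(z;\boldsymbol{\xi})a(z;\boldsymbol{\xi})$ is a factorization of a \emph{fixed} transfer function $h$: choosing a different analytic factor $a$ forces, through the Toeplitz system displayed above, a conformable change of the unilateral factor $f$, but it leaves the product $h=fa$ — and hence $S=|h|^{2}=|f|^{2}|a|^{2}$ — unchanged. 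Therefore none of the geometric objects can depend on the choice of $a$, and the information geometry on the $\boldsymbol{\xi}$-manifold is the same for every admissible $a$; in particular the outer factor furnished by Lemma \ref{Wiener-Khinchin factors} is one such choice.

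Then I would spell out the two small points that keep the statement from being vacuous or incomplete. First, the factorization is genuinely non-unique: the Toeplitz system shows that a nontrivial change $a\mapsto a'$ produces a genuinely different $f\mapsto f'$ with a different dependence on $\boldsymbol{\xi}$, so the lemma asserts invariance under a real gauge freedom, not a tautology about notation. Second, one should check that the \emph{non-metric} objects are covered as well: the $\alpha$-connection (\ref{connection_sdf}) and the tensor $T_{\mu\nu,\rho}$ are again built only from $\log S$ and its $\boldsymbol{\xi}$-derivatives, and $D^{(\alpha)}(S_{1}\|S_{2})$ only from the ratio $S_{2}/S_{1}$, so the same $S$-level invariance applies verbatim. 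It is worth recording explicitly that this lemma contains Lemma \ref{lem_transfer_factorization} as the special case $a(z;\boldsymbol{\xi})=z^{R}$ (equivalently $a_{i}=0$ for $i\neq R$, $a_{R}=1$), exactly as the excerpt introduces it.

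For completeness I would also indicate the direct, transfer-function-level verification, since it exposes where the only genuine subtlety lies. Writing $\partial_{\mu}\log h=\partial_{\mu}\log f+\partial_{\mu}\log a$ and substituting into $g_{\mu\nu}$ produces four groups of terms; because $\log f$ is, for a minimum-phase $f$, analytic in $z^{-1}$ off the unit disk while $\log a$ is analytic in $z$ on the disk, these cross terms are not individually $a$-independent, and one must invoke analyticity and the residue theorem to see that the four groups reorganize into the $a$-free quantity $\frac{1}{2\pi i}\oint\partial_{\mu}(\log h+\log\bar h)\,\partial_{\nu}(\log h+\log\bar h)\,\frac{dz}{z}$. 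The hard part, such as it is, is precisely this bookkeeping together with the need to assume $f$ and $a$ are outer so that $\log f$ and $\log a$ have the stated one-sided expansions; the $S$-level argument of the first paragraph is preferable because it never touches the individual logarithms and so sidesteps this point entirely.
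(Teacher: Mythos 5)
Your proof is correct and rests on exactly the same observation as the paper's: the decomposition $h=fa$ is a factorization of a fixed transfer function, the Toeplitz system forces $f$ to adjust conformably to any choice of $a$, and every geometric object is a functional of $h$ (equivalently $S$) alone. Your additional remarks on non-vacuousness and the term-by-term verification are sound elaborations but not needed for the argument.
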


\begin{proof}
It is obvious that the information geometry of a linear system is only decided by the transfer function $h(z;\boldsymbol{\xi})$. Whatever $a(z;\boldsymbol{\xi})$ is chosen, the transfer function is the same, because $f(z;\boldsymbol{\xi})$ is conformable to the Toeplitz system.
\end{proof}

For further generalization, the transfer function is extended to the consideration of the Blaschke product $b(z)$, which corresponds to the all-pass filter in signal processing. The transfer function can be decomposed into the following form:
\begin{equation}
	h(z;\boldsymbol{\xi})=f(z;\boldsymbol{\xi})a(z;\boldsymbol{\xi})b(z) \nonumber
\end{equation}
where the Blaschke product $b(z)$ is given by 
\begin{equation}
	b(z)=\prod_{s} b(z,z_s)=\prod_{s} \frac{|z_s|}{z_s}\frac{z_s-z}{1-\bar{z}_s z}\nonumber
\end{equation}
and every $z_s$ is on the unit disk. Although the Blaschke product can be written in $z^{-1}$ instead of $z$, our conclusion is not changed, and we choose $z$ for our convention. When $z_s=0$, the Blaschke product is given by $b(z,z_s)=z$. Regardless of $z_s$, the Blaschke product is analytic on the unit disk. Since the Taylor expansion of the Blaschke product provides positive order terms in $z$, it is also possible to incorporate the Blaschke product into $a(z;\boldsymbol{\xi})$. However, the Blaschke product is separately considered in the paper.

The logarithmic transfer function of a linear system is represented in terms of $f, a$ and $b$: 
\begin{align}
	\log{h(z;\boldsymbol{\xi})}&=\log{(f_0a_0)}+\log{(1+\sum_{r=1}^{\infty} 
\frac{f_r}{f_0} z^{-r})}+\log{(1+\sum_{r=1}^{\infty} \frac{a_r}{a_0} z^{r})}+\log{b(z)} \nonumber \\
	&=\phi_0+\sum_{s}\log{|z_s|}+\sum_{r=1}^{\infty} \phi_r(\boldsymbol{\xi}) z^{-r}+\sum_{r=1}^{\infty} \alpha_r(\boldsymbol{\xi}) z^{r}+\sum_{r=1}^{\infty}\beta_r z^r \nonumber
\end{align}
where $\phi_0=\log{(f_0a_0)}$ and $\phi_r, \alpha_r$ are the $r$-th coefficients of the logarithmic expansions. $\phi_r$ and $\alpha_r$ are functions of $\boldsymbol{\xi}$ unless all $f_r/f_0$ and $a_r/a_0$ are constant. Meanwhile, $\beta_r=\frac{1}{r}\sum_{s}\frac{|z_s|^{2r}-1}{z_s^{r}}$ is a constant in $\boldsymbol{\xi}$.

It is also straightforward to show that the information geometry is independent of the Blaschke product.
\begin{lem}
	\label{lem_blaschke} 
	The information geometry of a signal filter is independent of the Blaschke product.
\end{lem}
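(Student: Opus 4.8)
The plan is to give two independent arguments, mirroring the two representations developed in Section~\ref{sec_kahler_background}. The quickest route uses the spectral density function. On the unit circle the Blaschke product is an all-pass factor, $|b(e^{iw})|=1$ for every $w$, since each elementary factor $\tfrac{|z_s|}{z_s}\tfrac{z_s-z}{1-\bar z_s z}$ has modulus one on $|z|=1$. Hence the spectral density function of $h=fab$ satisfies $S(w;\boldsymbol{\xi})=|h|^2=|fa|^2|b|^2=|fa|^2$, so $S$ — and therefore $\log S$ and all of its $\boldsymbol{\xi}$-derivatives — does not see $b$ at all. Because the metric tensor~(\ref{metric_sdf}), the $\alpha$-connection~(\ref{connection_sdf}), the tensor $T$, and the $\alpha$-divergence are all functionals of $\log S$ alone, they are left unchanged when $b$ is inserted or removed, which already proves the lemma.

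For completeness I would also verify the statement directly from the transfer function, since the $z$-domain formulas~(\ref{metric_complex_coordinate1})--(\ref{metric_complex_coordinate2}) are written in terms of $\log h$ rather than $\log S$. Using the logarithmic decomposition displayed just before the lemma, $\log b(z)=\sum_{s}\log|z_s|+\sum_{r=1}^{\infty}\beta_r z^r$ with $\beta_r=\frac{1}{r}\sum_{s}\frac{|z_s|^{2r}-1}{z_s^{r}}$ constant in $\boldsymbol{\xi}$, so $\partial_\mu\log b(z)=0$ for every model coordinate $\mu$ and hence $\partial_\mu\log h=\partial_\mu\log(fa)$. Substituting this into~(\ref{metric_complex_coordinate1}), (\ref{metric_complex_coordinate2}) and into the $\alpha$-connection formula removes every occurrence of $b$, and the $\alpha$-divergence is handled exactly as in the spectral density argument. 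The degenerate factor $b(z,z_s)=z$ (the case $z_s=0$) is precisely the multiplicative $z^R$ factor, so that case is already subsumed by Lemma~\ref{lem_transfer_factorization}.

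As for where care is needed: the only genuine hypothesis being used is that the Blaschke zeros $z_s$ are fixed and do not belong to the model parameters $\boldsymbol{\xi}$ — this is what makes $\partial_\mu\log b\equiv 0$ and is what the constancy of $\beta_r$ encodes. If instead one absorbs $b$ into $a(z;\boldsymbol{\xi})$ (as the text remarks is possible, since the Taylor expansion of $b$ contributes only nonnegative powers of $z$), independence follows from Lemma~\ref{lem_gauge}. So the main obstacle is not a computation but making these structural observations precise; once $|b|=1$ on $\mathbb{T}$ and $\partial_\mu b=0$ are in hand, everything follows by inspection of the defining integrals.
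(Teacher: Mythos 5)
Your proposal is correct, and its second half is essentially the paper's own proof: the paper plugs the logarithmic decomposition into Equations (\ref{metric_complex_coordinate1})--(\ref{metric_complex_coordinate2}) and observes that the resulting components depend only on $\phi_r$ and $\alpha_r$, never on the $\beta_r$, precisely because the $\beta_r$ are constants in $\boldsymbol{\xi}$ and are annihilated by the partial derivatives $\partial_\mu$. What you add is a first, independent argument via the spectral density: since $|b(e^{iw})|=1$ on the unit circle, $\log S$ literally does not contain $b$, so every geometric object built from $\log S$ is untouched. This is a genuinely different (and arguably more conceptual) route --- it is exactly the mechanism the paper deploys later, in Lemma \ref{lem_io_factor}, to show independence of the full inner factor in the Beurling factorization, of which the Blaschke product is a special case. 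The all-pass argument buys generality (it covers any inner function, including singular inner factors, without needing an explicit series for $\log b$) and makes no reference to the particular expansion coefficients; the paper's $\beta_r$ argument buys an explicit verification inside the complexified-coordinate formulas that are used for the K\"ahler structure in Section \ref{sec_kahler_theory}, which is why the paper states it at this point. Your closing observations --- that the real hypothesis is that the Blaschke zeros $z_s$ are not model parameters, that the $z_s=0$ case reduces to Lemma \ref{lem_transfer_factorization}, and that absorbing $b$ into $a(z;\boldsymbol{\xi})$ reduces the claim to Lemma \ref{lem_gauge} --- are all consistent with the paper's setup and correctly identify where the argument would fail if the $z_s$ were allowed to vary with $\boldsymbol{\xi}$.
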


\begin{proof}
It is obvious that the Blaschke product is independent of the coordinate system $\boldsymbol{\xi}$. Plugging the above series into the expression of the metric tensor in complex coordinates, Equations (\ref{metric_complex_coordinate1}) and (\ref{metric_complex_coordinate2}), the metric tensor components are expressed in terms of $\phi_r$ and $\alpha_r$: 
\begin{align}
	g_{ij}&=\partial_i \phi_0\partial_j \phi_0+\sum_{r=1}^{\infty} \partial_i \phi_r \partial_{j}\alpha_r+\sum_{r=1}^{\infty} \partial_i \alpha_r \partial_{j} \phi_r \nonumber\\
	g_{i\bar{j}}&=\sum_{r=0}^{\infty} \partial_i \phi_r \partial_{\bar{j}} \bar{\phi}_r+\sum_{r=1}^{\infty} \partial_i \alpha_r \partial_{\bar{j}} \bar{\alpha}_r \nonumber
\end{align}
and it is noteworthy that the metric tensor components are independent of the $\beta_r$ terms, which are related to the Blaschke product, because those are not functions of $\boldsymbol{\xi}$. This is why the $z$-convention for the Blaschke product is not important. It is straightforward to repeat the same calculation for the $\alpha$-connection. Based on these, the information geometry of a linear system is independent of the Blaschke product.
\end{proof}

According to Lemma \ref{lem_gauge}, the geometry is invariant under the degree of freedom in choosing $a(z;\boldsymbol{\xi})$. By using the invariance of the geometry, it is possible to fix the degree of freedom as $a_r/a_0$ constant. With the choice of the degree of freedom, the metric tensor components of the information manifold are given by
\begin{align} 
\label{metric_tensor_gauge1}
	g_{ij}&=\partial_i \phi_0 \partial_j \phi_0 \\
\label{metric_tensor_gauge2}
	g_{i\bar{j}}&=\sum_{r=0}^{\infty} \partial_i \phi_r \partial_{\bar{j}} \bar{\phi}_r
\end{align}
and it is easy to verify that the metric tensor components are only dependent on $\phi_r$ and $\bar{\phi}_r$. In other words, the metric tensor is dependent only on the unilateral part of the transfer function and a constant term in $z$ of the analytic part.

By Lemma \ref{lem_transfer_factorization}, any transfer function with the upper-bounded degree in $z$ is reducible to a unilateral transfer function with a constant term. For this class of transfer functions, a similar expression for the metric tensor can be obtained. First of all, the logarithmic transfer function is given in the series expansion:
\begin{align}
	\log{h(z;\boldsymbol{\xi})}&=\log{z^R}+\log{h_{-R}}+\log{(1+\sum_{r=1}^{\infty}\frac{h_{-R+r}}{h_{-R}}z^{-r})} \nonumber \\
	&=\log{z^R}+\sum_{r=0}^{\infty}\eta_r z^{-r} \nonumber
\end{align}
where $R$ is the highest degree in $z$. The coefficients $\eta_r$ are also known as the complex cepstrum \cite{Oppenheim:1965}, and $\eta_0=\log{h_{-R}}$. After the series expansion of this logarithmic transfer function is plugged into the formulae of the metric tensor components, Equations (\ref{metric_complex_coordinate1}) and (\ref{metric_complex_coordinate2}), the metric tensor components are obtained as
\begin{align} 
	\label{metric_tensor_highest1}
	g_{ij}&=\partial_i \eta_0\partial_j \eta_0 \\
	\label{metric_tensor_highest2}
	g_{i\bar{j}}&=\sum_{r=0}^{\infty} \partial_i \eta_r \partial_{\bar{j}} \bar{\eta}_r
\end{align}
and these expressions for the metric tensor components are similar to Equations (\ref{metric_tensor_gauge1}) and (\ref{metric_tensor_gauge2}) with the exchange of $\phi_r\leftrightarrow\eta_r$.

As an extension of Lemma \ref{lem_blaschke}, it is possible to generalize it to the inner-outer factorization of the $H^2$-functions. A function in the $H^{2}$-space can be expressed as the product of outer and inner functions by the Beurling factorization \cite{Beurling:1949}. The generalization with the Beurling factorization is given by the following lemma.
\begin{lem}
\label{lem_io_factor} 
	The information geometry of a signal filter is independent of the inner function.
\end{lem}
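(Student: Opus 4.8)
The plan is to mirror the structure of the proof of Lemma~\ref{lem_blaschke}, but with the Blaschke product replaced by a general inner function. Recall from Beurling factorization that any $h\in H^2$ (or more precisely its outer part lives in $\exp(H^2(\mathbb{D}))$ as in Lemma~\ref{Wiener-Khinchin factors}) decomposes as $h(z;\boldsymbol{\xi})=f(z;\boldsymbol{\xi})a(z;\boldsymbol{\xi})\,\theta(z)$, where $\theta(z)$ is inner, i.e.\ $|\theta(e^{iw})|=1$ a.e.\ on $\mathbb{T}$, and $\theta$ is a product of a Blaschke product and a singular inner function $\exp\!\left(-\int \frac{e^{iw}+z}{e^{iw}-z}\,d\sigma(w)\right)$ with $\sigma$ a positive singular measure. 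The key observation is that the inner function is \emph{fixed} (independent of the model parameters $\boldsymbol{\xi}$), exactly as the Blaschke product was in Lemma~\ref{lem_blaschke}; this is the analogue of the ``$\beta_r$ is constant in $\boldsymbol{\xi}$'' remark.

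First I would write $\log h(z;\boldsymbol{\xi}) = \phi_0(\boldsymbol{\xi}) + \sum_{r\ge 1}\phi_r(\boldsymbol{\xi})z^{-r} + \sum_{r\ge 1}\alpha_r(\boldsymbol{\xi})z^{r} + \log\theta(z)$, and expand $\log\theta(z)$ in its own power series, say $\log\theta(z)=\sum_{r\ge 1}\gamma_r z^{r}$ (it is analytic and non-vanishing on $\mathbb{D}$ with $\theta(0)$ a unimodular constant we may absorb, or take $\gamma_0$ constant); crucially every $\gamma_r$ is a constant in $\boldsymbol{\xi}$. Then $\partial_i \log h = \partial_i\phi_0 + \sum_{r\ge1}\partial_i\phi_r\,z^{-r} + \sum_{r\ge1}\partial_i\alpha_r\,z^{r}$, with the $\log\theta$ contribution annihilated by $\partial_i$. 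Plugging this into Equations~(\ref{metric_complex_coordinate1}) and (\ref{metric_complex_coordinate2}) and applying the residue theorem (picking out the constant term in $z$), the $\gamma_r$ terms never appear, so the metric tensor components are identical to those in the proof of Lemma~\ref{lem_blaschke}:
\begin{align}
	g_{ij}&=\partial_i \phi_0\partial_j \phi_0+\sum_{r=1}^{\infty} \partial_i \phi_r \partial_{j}\alpha_r+\sum_{r=1}^{\infty} \partial_i \alpha_r \partial_{j} \phi_r \nonumber\\
	g_{i\bar{j}}&=\sum_{r=0}^{\infty} \partial_i \phi_r \partial_{\bar{j}} \bar{\phi}_r+\sum_{r=1}^{\infty} \partial_i \alpha_r \partial_{\bar{j}} \bar{\alpha}_r. \nonumber
\end{align}
The same cancellation occurs in the $\alpha$-connection expression~(\ref{connection_sdf}): every factor is a derivative $\partial_\mu\log S$ or $\partial_\mu\partial_\nu\log S$, and since $\log|\theta(e^{iw})|^2=0$ on $\mathbb{T}$ while the additive $\log\theta+\overline{\log\bar\theta}$ contributions to $\log S$ are parameter-independent, the inner factor drops out. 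Hence the metric, the $\alpha$-connection, and (by the argument already used for Lemma~\ref{lem_sdf_inv}) the $\alpha$-divergence are all unchanged, so the whole information geometry is independent of $\theta$.

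The main obstacle, such as it is, is justifying the term-by-term manipulations when $\theta$ contains a singular inner factor: unlike a finite Blaschke product, $\log\theta(z)$ need not have a norm-convergent power series up to the boundary, and one must be slightly careful that the interchange of $\partial_i$ with the series and with the contour integral is legitimate. This is handled cleanly by working on the boundary instead: on $\mathbb{T}$ one has $|\theta|=1$, so $\log S(e^{iw};\boldsymbol{\xi}) = \log|f a|^2 + \log|\theta|^2 = \log|f a|^2$ pointwise a.e., i.e.\ the spectral density function literally does not see $\theta$ at all. Since all geometric objects—metric~(\ref{metric_sdf}), $\alpha$-connection~(\ref{connection_sdf}), $\alpha$-divergence—are integrals over $\mathbb{T}$ of expressions built from $\log S$ and its $\boldsymbol{\xi}$-derivatives, and $S$ is independent of the inner function, the conclusion is immediate and no delicate boundary analysis of $\log\theta$ is needed. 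This also explains why the choice of whether to write the inner function in $z$ or $z^{-1}$, like the Blaschke convention in Lemma~\ref{lem_blaschke}, is immaterial. Combining the two viewpoints, one gets both that the geometry is well-defined on the outer part alone (consistent with Lemma~\ref{Wiener-Khinchin factors}) and that Lemmas~\ref{lem_transfer_factorization} and \ref{lem_blaschke} are special cases, since $z^R$ and the Blaschke product are particular inner functions.
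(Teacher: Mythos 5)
Your proposal is correct, and its decisive argument --- that $|\theta(e^{iw})|=1$ on the unit circle forces $S=|h|^2=|fa|^2$, so every geometric object built from $\log S$ and its $\boldsymbol{\xi}$-derivatives never sees the inner factor --- is precisely the paper's own one-line proof via the inner-outer factorization $h=\mathcal{O}\mathcal{I}$ with $|\mathcal{I}|=1$ on $\mathbb{T}$. The power-series preamble mirroring Lemma~\ref{lem_blaschke} is therefore unnecessary (and, as you note, delicate for singular inner factors); the only point worth adding is that the boundary argument does not even require the inner function to be independent of $\boldsymbol{\xi}$, which is how the paper states it.
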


\begin{proof}
A transfer function $h(z;\boldsymbol{\xi })$ in the $H^{2}$-space can be decomposed by the inner-outer factorization:
\begin{equation}
	h(z;\boldsymbol{\xi })=\mathcal{O}(z;\boldsymbol{\xi })\mathcal{I}(z;\boldsymbol{\xi })\nonumber
\end{equation}
where $\mathcal{O}(z;\boldsymbol{\xi })$ is an outer function and $\mathcal{I}(z;\boldsymbol{\xi })$ is an inner function. The $\alpha $-divergence is expressed with $S(z;\boldsymbol{\xi })=|h(z;\boldsymbol{\xi })|^{2}=|\mathcal{O}(z;\boldsymbol{\xi })\mathcal{I}(z;\boldsymbol{\xi })|^{2}=|\mathcal{O}(z;\boldsymbol{\xi })|^{2}$ on the unit circle, because the inner function has the unit modulus on the unit circle. Since the $\alpha $-divergence is represented only with the outer function, other geometric objects, such as the metric tensor and the $\alpha $-connection, are also independent of the inner function.
\end{proof}

\section{K\"ahler Manifold for Signal Processing}
\label{sec_kahler_theory} 

	An advantage of the transfer function representation in the complex $z$-domain is that it is easy to test whether or not the information geometry of a given signal processing filter is a K\"ahler manifold. As mentioned before, choosing the coefficients in $a(z;\boldsymbol{\xi}) $ is considered as fixing the degrees of freedom in calculation without changing any geometry. By setting $a(z;\boldsymbol{\xi})/a_0(\boldsymbol{\xi})$ a constant function in $\boldsymbol{\xi}$, the description of a statistical model becomes much simpler, and the emergence of K\"ahler manifolds can be easily verified. Since causal filters are our main concerns in practice, we concentrate on unilateral transfer functions. Although we will work with causal filters, the results in this section are also valid for the cases of bilateral transfer functions.

\begin{thm}
\label{thm_kahler_signal}
	For a signal filter with a finite complex cepstrum norm, the information geometry of the signal filter is a K\"ahler manifold.
\end{thm}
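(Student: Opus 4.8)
The plan is to produce a global K\"ahler potential for the metric and then read off closedness of the associated fundamental form. Recall that a Hermitian metric with components $g_{i\bar{j}}$ is K\"ahler exactly when $\partial_k g_{i\bar{j}}=\partial_i g_{k\bar{j}}$ for all holomorphic index values, and that the cleanest way to secure this is to exhibit a real function $\mathcal{K}$ with $g_{i\bar{j}}=\partial_i\partial_{\bar{j}}\mathcal{K}$, since then the fundamental $(1,1)$-form is $\omega=i\,\partial\bar\partial\mathcal{K}$ and $d\omega=0$ for free (because $\partial^2=\bar\partial^2=0$). Before doing anything I would invoke the reductions already in place: by Lemmas~\ref{lem_transfer_factorization}, \ref{lem_gauge}, \ref{lem_blaschke} and \ref{lem_io_factor} the information geometry is unchanged if $h$ is replaced by its outer, unilateral representative, so we may assume $\log h(z;\boldsymbol{\xi})=\sum_{r\ge 0}\eta_r(\boldsymbol{\xi})\,z^{-r}$ with each $\eta_r$ holomorphic in $\boldsymbol{\xi}$, and the metric is then given by Equations~(\ref{metric_tensor_highest1})--(\ref{metric_tensor_highest2}).

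The candidate potential is the square of the complex cepstrum norm, that is, the squared Hardy norm of the logarithmic transfer function,
\[
\mathcal{K}(\boldsymbol{\xi},\bar{\boldsymbol{\xi}})=\frac{1}{2\pi i}\oint_{|z|=1}\bigl|\log h(z;\boldsymbol{\xi})\bigr|^{2}\frac{dz}{z}=\sum_{r\ge 0}\eta_r(\boldsymbol{\xi})\,\bar\eta_r(\bar{\boldsymbol{\xi}}),
\]
which is finite precisely by the hypothesis of the theorem. Differentiating the series term by term and using that each $\eta_r$ is holomorphic while each $\bar\eta_r$ is anti-holomorphic in the parameters gives $\partial_i\partial_{\bar{j}}\mathcal{K}=\sum_{r\ge 0}\partial_i\eta_r\,\partial_{\bar{j}}\bar\eta_r$, which is exactly the mixed block $g_{i\bar{j}}$ of Equation~(\ref{metric_tensor_highest2}). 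Hence the fundamental form is $i\,\partial\bar\partial\mathcal{K}$ and is closed, so the information manifold is K\"ahler with K\"ahler potential $\mathcal{K}$. One may equally bypass the potential and check the K\"ahler identity directly: $\partial_k g_{i\bar{j}}=\sum_{r}(\partial_k\partial_i\eta_r)\,\partial_{\bar{j}}\bar\eta_r$ is symmetric in $k$ and $i$ simply because mixed partials commute, so $\partial_k g_{i\bar{j}}=\partial_i g_{k\bar{j}}$.

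I expect two points to require genuine care. The first is analytic: the identity $\mathcal{K}=\sum_r|\eta_r|^{2}$ and the term-by-term differentiation must be justified, which rests on $\{\eta_r(\boldsymbol{\xi})\}$ lying in $\ell^{2}$ locally uniformly in $\boldsymbol{\xi}$; this is where the finiteness of the complex cepstrum norm is really used, together with the smooth (indeed analytic) dependence of the cepstrum coefficients on $\boldsymbol{\xi}$ recorded after Lemma~\ref{Wiener-Khinchin factors}, via Parseval's identity on the circle. The second is structural and is, I think, the main obstacle: the ``pure'' components $g_{ij}=\partial_i\eta_0\,\partial_j\eta_0$ of Equation~(\ref{metric_tensor_highest1}) need not vanish, so one has to be precise about the sense in which the full $2n\times 2n$ tensor -- not merely its mixed block -- carries the K\"ahler structure, and about the compatibility of the complex structure with the metric. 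The resolution is that $g_{ij}\equiv 0$ if and only if $\eta_0=\log h_{-R}$ is constant in $\boldsymbol{\xi}$, i.e. the highest-degree impulse response is parameter-independent; this is exactly the criterion for the Hermitian structure to be manifest, and it is natural to isolate it as a separate statement immediately following this theorem. For the K\"ahler property itself, the potential $\mathcal{K}$ together with the closedness of $i\,\partial\bar\partial\mathcal{K}$ is what does the work.
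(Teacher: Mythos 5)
Your route via the potential $\mathcal{K}=\sum_r\eta_r\bar\eta_r$ is cleaner than the paper's for the mixed block, and in fact anticipates what the paper only establishes afterwards as a corollary (that the squared complex cepstrum norm is the K\"ahler potential). Your direct verification $\partial_k g_{i\bar{j}}=\sum_r(\partial_k\partial_i\eta_r)\partial_{\bar{j}}\bar\eta_r=\partial_i g_{k\bar{j}}$ is exactly the closedness check the paper performs, and your remark about term-by-term differentiation is a legitimate analytic point that the paper glosses over.

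However, the issue you correctly flag as ``the main obstacle'' --- the non-vanishing pure components $g_{ij}=\partial_i\eta_0\,\partial_j\eta_0$ --- is left open in your argument, and this is a genuine gap. You pivot to the observation that $g_{ij}\equiv 0$ iff $\eta_0$ is constant in $\boldsymbol{\xi}$, but that is the content of the \emph{next} theorem (the criterion for the Hermitian structure to be \emph{explicit}), not a resolution for the general filter, and the present theorem claims the K\"ahler property for \emph{every} filter with finite complex cepstrum norm. The paper closes this hole with the standard Hermitianization: it replaces $g$ by $\hat{g}(X,Y)=\tfrac{1}{2}\bigl(g(X,Y)+g(JX,JY)\bigr)$, which annihilates the pure block ($\hat{g}_{ij}=0$) while leaving the mixed block untouched ($\hat{g}_{i\bar{j}}=g_{i\bar{j}}$), and \emph{then} declares the K\"ahler form to be $\Omega=i\hat{g}_{i\bar{j}}\,d\xi^i\wedge d\bar\xi^j$. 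Without that step (or the hypothesis that $\eta_0$ is constant), the form $i\,\partial\bar\partial\mathcal{K}$ is not the fundamental form of the original metric $g$, only of its $(1,1)$-part, so the claim that ``the information manifold is K\"ahler'' is not yet established for the tensor the information geometry actually hands you. Adding one sentence invoking the canonical Hermitian metric associated to $g$ and $J$ would complete your proof and make it essentially equivalent to (and arguably tidier than) the paper's.
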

\begin{proof}
	The information manifold of a signal filter is described by the metric tensor $g$ with the components of the expressions, Equation (\ref{metric_tensor_gauge1}) and Equation (\ref{metric_tensor_gauge2}). Any complex manifold admits a Hermitian manifold by introducing a new metric tensor $\hat{g}$ \cite{Nakahara:2003}:
		\begin{equation}
		\hat{g}_p(X,Y)=\frac{1}{2}\big(g_p(X,Y)+g_p(J_pX,J_pY)\big)\nonumber
	\end{equation}
	where $X, Y$ are tangent vectors at point $p$ on the manifold and $J$ is the almost complex structure, such that

		\begin{equation}
		J_p \frac{\partial}{\partial \xi^i}=i\frac{\partial}{\partial \xi^i} \textrm{, } J_p\frac{\partial}{\partial \bar{\xi}^i}=-i\frac{\partial}{\partial \bar{\xi}^i}. \nonumber
	\end{equation}
	With the new metric tensor $\hat{g}$, it is straightforward to verify that the information manifold is equipped with the Hermitian structure:
		\begin{align}
		\hat{g}_{ij}&=\hat{g}(\partial_i,\partial_j)=0\nonumber\\
		\hat{g}_{i\bar{j}}&=\hat{g}(\partial_i,\partial_{\bar{j}})=g_{i\bar{j}}.\nonumber
	\end{align}
	Based on the above metric tensor expressions, it is obvious that the information geometry of a linear system is a Hermitian manifold.
	
	The K\"ahler two-form $\Omega$ of the manifold is given by
	\begin{equation}
		\Omega =i\hat{g}_{i\bar{j}}d\xi ^{i}\wedge d\bar{\xi}^{j}\nonumber
	\end{equation}
	where $\wedge$ is the wedge product. By plugging Equation (\ref{metric_tensor_gauge2}) into $\Omega$, it is easy to check that the K\"ahler two-form is closed by satisfying $\partial_{k} \hat{g}_{i\bar{j}}=\partial _{i} \hat{g}_{k\bar{j}}$ and $\partial _{\bar{k}} \hat{g}_{i\bar{j}}=\partial _{\bar{j}} \hat{g}_{i\bar{k}}$.
	
	Since K\"ahler manifolds are defined as the Hermitian manifolds with the closed K\"ahler two-forms, the information geometry of a signal filter is a K\"ahler manifold.
\end{proof}

	An information manifold for a linear system with purely real parameters is a submanifold of a K\"ahlerian information manifold where the metric tensor has the isometry of exchanging holomorphic- and anti-holomorphic coordinates. In addition to that, a given linear system can be described by two manifolds: one is K\"ahler, and another is non-K\"ahler. Although the dimension is doubled, working with K\"ahler manifolds has many advantages, which will be reiterated later.

	In Theorem \ref{thm_kahler_signal}, the Hermitian condition is clearly seen after introducing the new metric tensor $\hat{g}$. It is also possible to find a condition for which the metric tensor $g$ shows the explicit Hermitian structure. To impose the explicit Hermitian condition, the following theorem is worthwhile to mention.
\begin{thm}
\label{thm_tf_fa} 
	In the K\"ahlerian information geometry of a signal filter, the Hermitian structure is explicit in the metric tensor if and only if $\phi_0$ (or $f_0a_0$) is a constant in $\boldsymbol{\xi}$. Similarly, for the transfer function of which the highest degree in $z$ is finite, the Hermitian condition is directly found if and only if the coefficient of the highest degree in $z$ of the logarithmic transfer function is a constant in $\boldsymbol{\xi}$.
\end{thm}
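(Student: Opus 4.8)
The plan is to read off both equivalences directly from the closed forms already recorded for the pure-index block of the metric, so that the whole argument collapses to a rank-one linear-algebra observation. First I would pin down the terminology: saying that the Hermitian structure is \emph{explicit in the metric tensor} should mean that the metric $g$ of Theorem \ref{thm_kahler_signal} already coincides with the averaged metric $\hat{g}$, i.e.\ that $g_{ij}=0$ for all $i,j=1,\dots,n$ (equivalently $g_{\bar{i}\bar{j}}=(g_{ij})^{*}=0$), so that no averaging against the almost complex structure $J$ is needed and the only surviving components are the mixed ones $g_{i\bar{j}}$. Since $g_{\bar{i}\bar{j}}$ is just the conjugate of $g_{ij}$, it is enough to decide when every $g_{ij}$ vanishes.

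The second step is to invoke the explicit form of the holomorphic block. Working, by Lemma \ref{lem_gauge}, in the gauge where $a(z;\boldsymbol{\xi})/a_0(\boldsymbol{\xi})$ is constant --- a choice that changes neither the manifold nor the numbers $g_{ij}$ --- Equation (\ref{metric_tensor_gauge1}) gives $g_{ij}=\partial_i\phi_0\,\partial_j\phi_0$ with $\phi_0=\log(f_0a_0)$. (One can also obtain this straight from Equation (\ref{metric_complex_coordinate1}): minimum phase makes $\log h=\phi_0+\sum_{r\ge1}\phi_r z^{-r}$ analytic on the disk, so $\partial_i\log h$ has constant term $\partial_i\phi_0$, and the contour integral extracts precisely $\partial_i\phi_0\,\partial_j\phi_0$; note in passing that $f_0a_0=h_0$ is the zeroth impulse response, so ``$\phi_0$ is constant'' is an intrinsic condition on $h$.) Hence the $n\times n$ holomorphic block is the symmetric matrix of rank at most one with entries $(\partial_i\phi_0)(\partial_j\phi_0)$.

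Both implications are then immediate. If $\phi_0$ (equivalently $f_0a_0$) is constant in $\boldsymbol{\xi}$, then $\partial_i\phi_0=0$ for all $i$, hence $g_{ij}\equiv0$ and $g$ is block anti-diagonal, which is exactly the explicit Hermitian form. Conversely, if $g_{ij}=0$ for all $i,j$, then in particular $g_{ii}=(\partial_i\phi_0)^2=0$ for each $i$; since the only complex number whose square vanishes is zero, $\partial_i\phi_0=0$ for every $i=1,\dots,n$, so $\phi_0=\log(f_0a_0)$, and hence $f_0a_0$, is constant in the model parameters. For the second assertion I would rerun the same argument with Equation (\ref{metric_tensor_highest1}), $g_{ij}=\partial_i\eta_0\,\partial_j\eta_0$, in place of Equation (\ref{metric_tensor_gauge1}), using that $\eta_0=\log h_{-R}$ is the degree-zero coefficient of $\log h(z;\boldsymbol{\xi})-\log z^{R}$, i.e.\ the logarithm of the impulse response function $h_{-R}$ of the highest degree $R$ in $z$; so $g_{ij}\equiv0$ if and only if $\eta_0$ is constant if and only if $h_{-R}$ is constant.

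The proof carries no analytic difficulty; the two places that need care are (i) fixing the meaning of ``explicit Hermitian'' as the property ``$g_{ij}\equiv0$'' of the given metric, rather than as a feature of one particular gauge or coordinate presentation, and (ii) the observation that a symmetric matrix $(\partial_i\phi_0)(\partial_j\phi_0)$ of rank at most one vanishes only if the covector $(\partial_i\phi_0)_i$ does --- this is what makes the converse direction tight. Everything else is direct substitution into Equations (\ref{metric_tensor_gauge1}) and (\ref{metric_tensor_highest1}).
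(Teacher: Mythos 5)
Your proof is correct and takes essentially the same route as the paper's: both directions are read off from $g_{ij}=\partial_i\phi_0\,\partial_j\phi_0$ (Equation (\ref{metric_tensor_gauge1})), respectively $g_{ij}=\partial_i\eta_0\,\partial_j\eta_0$ (Equation (\ref{metric_tensor_highest1})), and you in fact supply the one step the paper glosses over in the forward direction, namely that the rank-one symmetric matrix $(\partial_i\phi_0)(\partial_j\phi_0)$ vanishes only if every $\partial_i\phi_0$ does (via the diagonal entries). The only blemish is the parenthetical claim that $f_0a_0=h_0$: by the Toeplitz system $h_0=\sum_{r\ge 0}f_r a_r$, so this identity holds only for causal filters, though $\phi_0$ is still intrinsic to $h$ (it is the zeroth complex cepstrum coefficient up to the constant Blaschke contribution) and the slip does not affect the argument.
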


\begin{proof}
Let us prove the first statement. 

$(\Rightarrow)$ If the geometry is K\"ahler, it should be the Hermitian manifold satisfying 
\begin{equation}
	g_{ij}=\partial_i \phi_0\partial_j \phi_0=0 \nonumber
\end{equation}
for all $i$ and $j$. This equation exhibits that $f_0a_0$ is a constant in $\boldsymbol{\xi}$, because $\phi_0=\log{(f_0a_0)}$.

$(\Leftarrow )$ If $\phi_0$ (or $f_{0}a_{0}$) is a constant in $\boldsymbol{\xi }$, the metric tensor is found from Equations (\ref{metric_tensor_gauge1}) and  (\ref{metric_tensor_gauge2}), 
\begin{align}
	g_{ij} &=0 \nonumber \\
	\label{kahler_metric_tensor_nonvanishing}
	g_{i\bar{j}} &=\sum_{r=0}^{\infty }\partial _{i}\phi _{r}\partial _{\bar{j}}\bar{\phi}_{r}
\end{align}
and these metric tensor conditions impose that the geometry is the Hermitian manifold. It is noteworthy that the non-vanishing metric tensor components are expressed only with $\phi _{r}$ and $\bar{\phi}_{r}$, which are functions of the impulse response functions $f_r$ in $f(z;\boldsymbol{\xi })$, the unilateral part of the transfer function. For the manifold to be a K\"ahler manifold, the K\"ahler two-form $\Omega $ needs to be a closed two-form. The condition for the closed K\"ahler two-form $\Omega$ is that $\partial_{k}g_{i\bar{j}}=\partial _{i}g_{k\bar{j}}$ and $\partial _{\bar{k}}g_{i\bar{j}}=\partial _{\bar{j}}g_{i\bar{k}}$. It is easy to verify that the metric tensor components, Equation (\ref{kahler_metric_tensor_nonvanishing}), satisfy the conditions for the closed K\"ahler two-form. The Hermitian manifold with the closed K\"ahler two-form is a K\"ahler manifold.

The proof for the second statement is straightforward, because it is similar to the proof of the first one by exchanging $\phi_r\leftrightarrow\eta_r$. Let us assume that the highest degree in $z$ is $R$. According to Lemma \ref{lem_transfer_factorization}, it is possible to reduce a bilateral transfer function with finite terms along the non-causal direction to the unilateral transfer function by using the factorization of $z^R$. After that, we need to replace $\eta_0$ with $\phi_0$ in the proof. The two theorems are equivalent.
\end{proof}

	Theorem \ref{thm_tf_fa} can be applied to submanifolds of the information manifolds. For example, a submanifold of a linear system is a K\"ahler manifold if and only if $\phi_0$ (or $f_0a_0$) is constant on the submanifold, {\em i.e.}, $\phi_0$ is a function of the coordinates orthogonal to the submanifold.

On a K\"ahler manifold, the metric tensor is derived from the following equation:
\begin{equation} 
	\label{metric_potential}
	g_{i\bar{j}}=\partial_i\partial_{\bar{j}}\mathcal{K}
\end{equation}
where $\mathcal{K}$ is the K\"ahler potential. There exists the degree of freedom in K\"ahler potential up to the holomorphic and anti-holomorphic function: $\mathcal{K}(\xi,\bar{\xi})=\mathcal{K}'(\zeta,\bar{\zeta})+\phi(\zeta)+\psi(\bar{\zeta})$. However, geometry is derived from the same relation: $g_{i\bar{j}}=\partial_i\partial_{\bar{j}}\mathcal{K}$. By using Equation (\ref{metric_potential}), the information on the geometry can be extracted from the K\"ahler potential. It is necessary to find the K\"ahler potential for the signal processing geometry. The following corollary shows how to get the K\"ahler potential for the K\"ahlerian information manifold.

\begin{crl}
	For a given K\"ahlerian information geometry, the K\"ahler potential of the geometry is the square of the Hardy norm of the logarithmic transfer function. In other words, the K\"ahler potential is the square of the complex cepstrum norm of a signal filer.
\end{crl}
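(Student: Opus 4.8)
The plan is to exhibit a candidate potential and verify $g_{i\bar j}=\partial_i\partial_{\bar j}\mathcal{K}$ directly from the cepstral series already in hand. Define
\begin{equation}
\mathcal{K}(\boldsymbol{\xi},\bar{\boldsymbol{\xi}}):=\big\|\log h(z;\boldsymbol{\xi})\big\|_{H^2}^2=\frac{1}{2\pi i}\oint_{|z|=1}\big|\log h(z;\boldsymbol{\xi})\big|^2\frac{dz}{z},\nonumber
\end{equation}
which is finite exactly because the complex cepstrum norm is assumed finite, and is manifestly real and non-negative. By Lemmas \ref{lem_transfer_factorization}, \ref{lem_gauge}, \ref{lem_blaschke} and \ref{lem_io_factor} one may, without changing the geometry, replace $h$ by its outer unilateral part with $a(z;\boldsymbol{\xi})/a_0$ fixed to a constant, so that $\log h=\phi_0+\sum_{r\geq 1}\phi_r z^{-r}$ with each $\phi_r=\phi_r(\boldsymbol{\xi})$ holomorphic in the model parameters.

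First I would evaluate the contour integral. On $|z|=1$ one has $\overline{\log h}=\bar\phi_0+\sum_{r\geq 1}\bar\phi_r z^{r}$, so the integral extracts the constant Fourier coefficient of the product (Parseval in $H^2$) and yields
\begin{equation}
\mathcal{K}=\sum_{r=0}^{\infty}|\phi_r|^2,\nonumber
\end{equation}
which is precisely the square of the complex cepstrum norm; this already establishes the second sentence of the corollary. Next I would differentiate term by term: since $\phi_r$ is holomorphic, $\partial_{\bar j}\phi_r=0$ and $\partial_i\bar\phi_r=0$, hence $\partial_i\partial_{\bar j}(\phi_r\bar\phi_r)=\partial_i\phi_r\,\partial_{\bar j}\bar\phi_r$ and therefore
\begin{equation}
\partial_i\partial_{\bar j}\mathcal{K}=\sum_{r=0}^{\infty}\partial_i\phi_r\,\partial_{\bar j}\bar\phi_r=g_{i\bar j},\nonumber
\end{equation}
which is exactly Equation (\ref{metric_tensor_gauge2}) (equivalently Equation (\ref{kahler_metric_tensor_nonvanishing})). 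Together with $g_{ij}=0$ in the K\"ahler setting, this shows that $\mathcal{K}$ is a K\"ahler potential in the sense of Equation (\ref{metric_potential}). For a filter whose highest degree in $z$ is finite, the identical computation with $\eta_r$ replacing $\phi_r$ and Equations (\ref{metric_tensor_highest1})--(\ref{metric_tensor_highest2}) gives the same result; one should also note that $\mathcal{K}$ is determined only up to an additive holomorphic plus anti-holomorphic term, which matches the invariance of the geometry under the $z^R$ factor, the Blaschke product, and the inner function.

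The only genuine obstacle is justifying the term-by-term differentiation under the infinite sum, equivalently differentiating under the contour integral sign. This needs the smoothness (analyticity) of $\boldsymbol{\xi}\mapsto\phi_r(\boldsymbol{\xi})$ noted after Lemma \ref{Wiener-Khinchin factors}, together with locally uniform convergence of the differentiated cepstral series; the latter follows from the finite-norm hypothesis, since $\{\partial_i\phi_r\}_r\in\ell^2$ and a Cauchy--Schwarz estimate bounds the tails of $\sum_r\partial_i\phi_r\,\partial_{\bar j}\bar\phi_r$ uniformly on compact parameter sets. Everything else is routine bookkeeping with Fourier coefficients.
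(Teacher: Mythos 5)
Your proposal is correct and follows essentially the same route as the paper: both rest on the single observation that, by holomorphy of $\log h$ in $\boldsymbol{\xi}$, the mixed derivative $\partial_i\partial_{\bar j}\big(\log h\,\overline{\log h}\big)$ reduces to $\partial_i\log h\,\partial_{\bar j}\overline{\log h}$, so that $g_{i\bar j}=\partial_i\partial_{\bar j}\|\log h\|_{H^2}^2$. The paper carries this out directly on the contour integral (via two "integrations by parts" in the parameter derivatives), while you do it on the Parseval series $\sum_r|\phi_r|^2$ and, commendably, flag the term-by-term differentiation issue that the paper passes over silently.
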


\begin{proof}
Given a transfer function $h(z;\boldsymbol{\xi})$, the non-trivial components of the metric tensor for a signal processing model are given by Equation (\ref{metric_complex_coordinate2}). By using integration by parts, the metric tensor component is represented by 
\begin{equation}
	g_{i\bar{j}}=\frac{1}{2\pi i}\oint_{|z|=1}\bigg\{\partial_i\Big( \log{h(z;\boldsymbol{\xi})}\partial_{\bar{j}} \log{\bar{h}(\bar{\xi};\bar{\boldsymbol{\xi}})}\Big)- \log{h(z;\boldsymbol{\xi})}\partial_i\partial_{\bar{j}} \log{\bar{h}(\bar{\xi};\bar{\boldsymbol{\xi}})}\bigg\}\frac{dz}{z} \nonumber
\end{equation}
where the latter term goes to zero by holomorphicity. When we integrate by parts with respect to the anti-holomorphic derivative once again, the metric tensor is expressed with 
\begin{equation}
	g_{i\bar{j}}=\frac{1}{2\pi i}\oint_{|z|=1}\bigg\{\partial_i\partial_{\bar{j}}\Big( \log{h(z;\boldsymbol{\xi})} \log{\bar{h}(\bar{\xi};\bar{\boldsymbol{\xi}})}\Big)-\partial_i\Big( \partial_{\bar{j}}\log{h(z;\boldsymbol{\xi})}\log{\bar{h}(\bar{\xi};\bar{\boldsymbol{\xi}})}\Big)\bigg\}\frac{dz}{z}
\nonumber
\end{equation}
and the latter term is also zero, because $h(z;\boldsymbol{\xi})$ is a holomorphic function.

Finally, the metric tensor is obtained as 
\begin{equation}
	g_{i\bar{j}}=\partial_i\partial_{\bar{j}}\Bigg(\frac{1}{2\pi i}\oint_{|z|=1}\big( \log{h(z;\boldsymbol{\xi})}\big)\big( \log{h(z;\boldsymbol{\xi})}\big)^{*}\frac{dz}{z}\Bigg) \nonumber
\end{equation}
and by the definition of the K\"ahler potential, Equation (\ref{metric_potential}), the K\"ahler potential of the linear system geometry is given by 
\begin{equation}
	\mathcal{K}=\frac{1}{2\pi i}\oint_{|z|=1}\big( \log{h(z;\boldsymbol{\xi})}\big)\big( \log{h(z;\boldsymbol{\xi})}\big)^{*}\frac{dz}{z} \nonumber
\end{equation}
up to a holomorphic function and an anti-holomorphic function. The right-handed side of the above equation is known as the square of the Hardy norm for the logarithmic transfer function. It is straightforward to derive the relation between the K\"ahler potential and the square of the Hardy norm of the logarithmic transfer function: 
\begin{equation} 
	\label{kahler_potential_Hardy}
	\mathcal{K}=\frac{1}{2\pi i}\oint_{|z|=1}\big( \log{h(z;\boldsymbol{\xi})}\big)\big( \log{h(z;\boldsymbol{\xi})}\big)^{*}\frac{dz}{z}=\|\log h(z;\boldsymbol{\xi})\|^2_{H^2}.
\end{equation}
	Additionally, the Hardy norm of the logarithmic transfer function is also known as the complex cepstrum norm of a linear system \cite{Oppenheim:1965, Martin:2000}.
\end{proof}

For a given linear system, the K\"ahler potential of the geometry is given by $\phi_r$, $\alpha_r$ and the complex conjugates of $\phi_r$, $\alpha_r$:
\begin{equation}
	\mathcal{K}=\sum_{r=0}^{\infty} (\phi_r\bar{\phi}_r+\alpha_r\bar{\alpha}_r).\nonumber
\end{equation}
However, the geometry is not dependent on $\alpha$ and $\bar{\alpha}$, because those are not the functions of the model parameters $\boldsymbol{\xi}$ under fixing the degree of the freedom. By using Equation (\ref{kahler_metric_tensor_nonvanishing}), the K\"ahler potential is expressed with
\begin{equation}
	\mathcal{K}=\sum_{r=0}^{\infty} \phi_r\bar{\phi}_r \nonumber
\end{equation}
and it is noticeable that the K\"ahler potential only depends on $\phi_r$ and $\bar{\phi}_r$, which come from the unilateral part of the transfer function decomposition. It is possible to obtain a similar expression for the finite highest upper-degree case by changing $\phi_r$ to $\eta_r$.

Since we assume that the complex cepstrum norm is finite, a transfer function $h(z;\boldsymbol{\xi})$ in the $H^2$-space also lives in the Hardy space of 
\begin{equation}
	\mathcal{K}=\|\log h(z;\boldsymbol{\xi})\|^2_{H^2}<\infty.\nonumber
\end{equation}
This implies that the transfer function lives not only in $H^2$, but also in $\exp{(H^2)}$, equivalently $\log{h}$ in the $H^2$-space.

From Equation (\ref{metric_potential}), the metric tensor is derived from the K\"ahler potential. Additionally, the metric tensor is also calculated from the $\alpha$-divergence. These facts indicate that there exists a connection between the K\"ahler potential and the $\alpha$-divergence.

\begin{crl}
	The K\"ahler potential is a constant term in $\alpha$, up to purely holomorphic or purely anti-holomorphic functions, of the $\alpha$-divergence between a signal processing filter and the all-pass filter of a unit transfer function.
\end{crl}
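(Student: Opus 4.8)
The plan is to substitute $S_1=S_0\equiv 1$ and $S_2=S=|h(z;\boldsymbol{\xi})|^{2}$ into the definition of the $\alpha$-divergence, expand the integrand in powers of $\alpha$, and isolate the coefficient of $\alpha^{0}$. Using $\log S_0=0$ and $S^{\alpha}=e^{\alpha\log S}$, the $k=0$ and $k=1$ terms of the exponential series cancel $-1-\alpha\log S$, so that
\begin{equation}
	D^{(\alpha)}(S_0\|S)=\frac{1}{2\pi\alpha^{2}}\int_{-\pi}^{\pi}\sum_{k\ge 2}\frac{\alpha^{k}}{k!}(\log S)^{k}\,dw=\frac{1}{4\pi}\int_{-\pi}^{\pi}(\log S)^{2}\,dw+O(\alpha). \nonumber
\end{equation}
Reading off the $\alpha^{0}$ coefficient, the constant term in $\alpha$ is $D^{(0)}(S_0\|S)=\tfrac{1}{4\pi}\int_{-\pi}^{\pi}(\log S)^{2}\,dw$, the $0$-divergence encountered in the multiplication rule after Lemma~\ref{lem_sdf_inv}; the standing finite-cepstrum hypothesis already guarantees $\log S\in L^{2}$, which is all that is needed for this term. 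The same $\alpha^{0}$ coefficient is produced by $D^{(\alpha)}(S\|S_0)$, so the orientation of the divergence is immaterial.

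Next I would pass to a contour integral and split $\log S$ into holomorphic and anti-holomorphic parts. After Lemma~\ref{lem_transfer_factorization} reduces $h$ to a unilateral transfer function with a constant term, one has $S=h\bar h$ on $|z|=1$, hence $\log S=\log h(z;\boldsymbol{\xi})+\big(\log h(z;\boldsymbol{\xi})\big)^{*}$, and
\begin{equation}
	D^{(0)}(S_0\|S)=\frac{1}{2}\cdot\frac{1}{2\pi i}\oint_{|z|=1}\Big((\log h)^{2}+2(\log h)(\log h)^{*}+\big((\log h)^{*}\big)^{2}\Big)\frac{dz}{z}. \nonumber
\end{equation}
By the preceding corollary, Equation~(\ref{kahler_potential_Hardy}), the cross term contributes $\tfrac{1}{2}\cdot 2\,\|\log h\|_{H^{2}}^{2}=\mathcal{K}$. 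For the two pure terms I would use the cepstrum expansion $\log h(z;\boldsymbol{\xi})=\eta_{0}+\sum_{r\ge 1}\eta_{r}z^{-r}$ (equivalently $\phi_{0}+\sum_{r\ge 1}\phi_{r}z^{-r}$ in the gauge of Lemma~\ref{lem_gauge}) and extract the $z^{0}$ Fourier coefficient by the residue theorem: $\tfrac{1}{2\pi i}\oint(\log h)^{2}\tfrac{dz}{z}=\eta_{0}^{2}$, which is holomorphic in $\boldsymbol{\xi}$, and $\tfrac{1}{2\pi i}\oint\big((\log h)^{*}\big)^{2}\tfrac{dz}{z}=\bar{\eta}_{0}^{2}$, which is anti-holomorphic in $\boldsymbol{\xi}$. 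Therefore
\begin{equation}
	D^{(0)}(S_0\|S)=\mathcal{K}+\tfrac{1}{2}\eta_{0}^{2}+\tfrac{1}{2}\bar{\eta}_{0}^{2}, \nonumber
\end{equation}
so $\mathcal{K}$ coincides with the constant term in $\alpha$ of the $\alpha$-divergence modulo the purely holomorphic function $-\tfrac{1}{2}\eta_{0}^{2}$ and the purely anti-holomorphic function $-\tfrac{1}{2}\bar{\eta}_{0}^{2}$; both are annihilated by $\partial_{i}\partial_{\bar{j}}$, consistently with the gauge freedom in the K\"ahler potential noted after Equation~(\ref{metric_potential}).

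The main difficulty is not any single computation but the bookkeeping: one must justify the term-by-term expansion and the interchange of sum and integral, which rests on the finiteness hypotheses maintained since Theorem~\ref{thm_kahler_signal}, and one must recognize that the two residual contour integrals are genuinely holomorphic, respectively anti-holomorphic, in $\boldsymbol{\xi}$. The latter is immediate once one notes that $(\log h)^{2}$ is holomorphic on the disk, so its $z^{0}$-coefficient is built solely from $\eta_{0}=\log h_{0}$, a holomorphic function of $\boldsymbol{\xi}$. Beyond this I expect no obstacle.
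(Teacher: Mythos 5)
Your proposal is correct and follows essentially the same route as the paper: expand the $\alpha$-divergence about $\alpha=0$ so that the constant term is the $0$-divergence $\tfrac{1}{2}\oint(\log h+\log\bar h)^2\tfrac{dz}{z}/(2\pi i)$, whose cross term is $\mathcal{K}=\|\log h\|_{H^2}^2$ and whose pure terms are the holomorphic and anti-holomorphic residuals ($\tfrac{1}{2}\eta_0^2$ and $\tfrac{1}{2}\bar\eta_0^2$, matching the paper's $F(\boldsymbol{\xi})+\bar F(\bar{\boldsymbol{\xi}})$ with $F=\tfrac{1}{2}\phi_0^2$ after the reduction to a unilateral transfer function). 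Your explicit residue computation of the $z^0$ coefficients is a slightly more detailed justification of the same steps.
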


\begin{proof}
After replacing the spectral density function with the transfer function, the 0-divergence between a signal filter and the all-pass filter with a unit transfer function is given by
\begin{align}
D^{(0)}(1||h) &=\frac{1}{2\pi i}\oint_{|z|=1}\frac{1}{2}(\log{h}+\log{\bar{h}})^2\frac{dz}{z} \nonumber \\
&={\mathcal{K}}+\frac{1}{2\pi i}\oint_{|z|=1}\frac{1}{2}\big((\log{h})^2+(\log{\bar{h}})^2 \big)\frac{dz}{z} \nonumber \\
&={\mathcal{K}}+F(\boldsymbol{\xi})+\bar{F}(\bar{\boldsymbol{\xi}}) \nonumber
\end{align}
where $F(\boldsymbol{\xi})=\frac{1}{2}\phi_0^2=\frac{1}{2}(\log{(f_0a_0)})^2$. For a bilateral transfer function, $F(\boldsymbol{\xi})=\frac{1}{2}(\phi_0+\sum \log{|z_s|})^2+\sum_{r=1}\phi_r(\alpha_r+\beta_r) $.

For non-zero $\alpha$, the $\alpha$-divergence between a signal and the white noise is also obtained as
\begin{align}
	D^{(\alpha)}(1||h)&=\frac{1}{2\pi i\alpha^2}\oint_{|z|=1} \big\{h^{\alpha}-1-\alpha (\log{h}+\log{\bar{h}})\big\}\frac{dz}{z} \nonumber \\
	&=\frac{1}{2\pi i}\oint_{|z|=1} \Big( \frac{1}{2}(\log{h}+\log{\bar{h}})^2+\sum_{n=1}^{\infty}\frac{1}{(n+2)!}\alpha^{n}(\log{h}+\log{\bar{h}})^{n+2}\Big)\frac{dz}{z} \nonumber \\
	&=D^{(0)}(1||h)+{\mathcal{O}}(\alpha) \nonumber \\
	&={\mathcal{K}}+F(\boldsymbol{\xi})+\bar{F}(\bar{\boldsymbol{\xi}})+{\mathcal{O}}(\alpha) . \nonumber
\end{align}
When $f_0a_0$ is unity, a constant term in $\alpha$ of the $\alpha$-divergence is the K\"ahler potential. This shows the relation between the $\alpha$-divergence and the K\"ahler potential.
\end{proof}

	The $\alpha$-connection on a K\"ahler manifold is expressed with the transfer function by using Equation (\ref{sdf_tf}) and Equation (\ref{connection_sdf}). It is also cross-checked from the $\alpha$-divergence in the transfer function representation.

\begin{crl}
The $\alpha$-connection components of the K\"ahlerian information geometry are found as
\begin{align}
	\Gamma^{(\alpha)}_{ij,\bar{k}}&=\frac{1}{2\pi i}\oint_{|z|=1}\big(\partial_i\partial_j \log{h(z;\boldsymbol{\xi})}-\alpha \partial_i \log{h(z;\boldsymbol{\xi})}\partial_j \log{h(z;\boldsymbol{\xi})}\big)\big(\partial_k\log{h(z;\boldsymbol{\xi})}\big)^{*}\frac{dz}{z} \nonumber \\
	\Gamma^{(\alpha)}_{ij,k}&=\frac{1}{2\pi i}\oint_{|z|=1}\big(\partial_i\partial_j \log{h(z;\boldsymbol{\xi})}-\alpha \partial_i \log{h(z;\boldsymbol{\xi})}\partial_j \log{h(z;\boldsymbol{\xi})}\big)\big(\partial_k\log{h(z;\boldsymbol{\xi})}\big)\frac{dz}{z} \nonumber \\
	\Gamma^{(\alpha)}_{i\bar{j},k}&=\frac{1}{2\pi i}\oint_{|z|=1}-\alpha\big(\partial_i \log{h(z;\boldsymbol{\xi})}\big)\big(\partial_{j} \log{h(z;\boldsymbol{\xi})}\big)^{*}\big(\partial_k \log{h(z;\boldsymbol{\xi})}\big)\frac{dz}{z} \nonumber \\
	\Gamma^{(\alpha)}_{i\bar{j},\bar{k}}&=\frac{1}{2\pi i}\oint_{|z|=1}-\alpha\big(\partial_i \log{h(z;\boldsymbol{\xi})}\big)\big(\partial_{\bar{j}} \log{h(z;\boldsymbol{\xi})}\big)^{*}\big(\partial_k \log{h(z;\boldsymbol{\xi})}\big)^{*}\frac{dz}{z} \nonumber
\end{align}
and the non-trivial components of the symmetric tensor $T$ are given by 
\begin{align} 
	\label{symt_transfer}
	T_{ij,\bar{k}}=\frac{1}{\pi i}\oint_{|z|=1}\big(\partial_i \log{h(z;\boldsymbol{\xi})})(\partial_j \log{h(z;\boldsymbol{\xi})}\big)\big(\partial_k\log{h(z;\boldsymbol{\xi})}\big)^{*}\frac{dz}{z}\\
	T_{ij,k}=\frac{1}{\pi i}\oint_{|z|=1}\big(\partial_i \log{h(z;\boldsymbol{\xi})})(\partial_j \log{h(z;\boldsymbol{\xi})}\big)\big(\partial_k\log{h(z;\boldsymbol{\xi})}\big)\frac{dz}{z}\nonumber.
\end{align}
In particular, the non-vanishing 0-connection components are expressed with
\begin{equation}
	\Gamma^{(0)}_{ij,\bar{k}}=(\Gamma^{(0)}_{\bar{i}\bar{j},k})^{*}=\frac{1}{2\pi i}\oint_{|z|=1}\big(\partial_i\partial_j \log{h(z;\boldsymbol{\xi})}\big)\big(\partial_k \log{h(z;\boldsymbol{\xi})}\big)^{*}\frac{dz}{z} \nonumber \\
\end{equation}
and the 0-connection is directly derived from the K\"ahler potential: 
\begin{equation} 
	\label{connection_potential}
	\Gamma^{(0)}_{ij,\bar{k}}=\partial_i\partial_j\partial_{\bar{k}}\mathcal{K}.
\end{equation}
Additionally, the $\alpha$-connection and the $(-\alpha)$-connection are dual to each other.
\end{crl}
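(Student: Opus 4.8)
The plan is to derive everything from the spectral-density expression (\ref{connection_sdf}) for the $\alpha$-connection and from the definition of $T$, by substituting $\log S=\log h+\log\bar h$ (Equation (\ref{sdf_tf})) and then collapsing the integrands with holomorphicity, exactly as in the proof of the preceding corollary on the K\"ahler potential. First I would record the elementary facts that carry the argument: $\log h(z;\boldsymbol{\xi})$ involves only the holomorphic coordinates, so $\partial_{\bar i}\log h=0$, and dually $\partial_i\log\bar h=0$; hence $\partial_i\partial_{\bar j}\log S=0$, while $\partial_i\partial_j\log S=\partial_i\partial_j\log h$ and $\partial_{\bar i}\partial_{\bar j}\log S=(\partial_i\partial_j\log h)^{*}$; and, on $|z|=1$, the identity $\log\bar h(\bar z;\bar{\boldsymbol{\xi}})=\overline{\log h(z;\boldsymbol{\xi})}$ gives $\partial_{\bar k}\log\bar h=(\partial_k\log h)^{*}$.

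Substituting these identities into (\ref{connection_sdf}) for each assignment of the three lower indices among $\{i,j,k\}$ and $\{\bar i,\bar j,\bar k\}$ produces the four displayed formulas for $\Gamma^{(\alpha)}$ at once: when the two indices carrying the second derivative are both holomorphic, the $\partial\partial\log h$ term survives and the $\alpha$-piece is a product of two holomorphic first derivatives, whereas if that pair is mixed the second-derivative term vanishes and only the $\alpha$-piece remains; the third factor is read directly from $\partial_\rho\log S$. The components not displayed follow from these by complex conjugation of the contour integral together with the symmetry $\Gamma^{(\alpha)}_{\mu\nu,\rho}=\Gamma^{(\alpha)}_{\nu\mu,\rho}$. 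The same substitution in $T_{\mu\nu,\rho}=\frac{1}{\pi i}\oint(\partial_\mu\log S)(\partial_\nu\log S)(\partial_\rho\log S)\frac{dz}{z}$, together with the total symmetry of $T$, reduces every component to $T_{ij,\bar k}$, $T_{ij,k}$, or a conjugate, which written in terms of $\log h$ is (\ref{symt_transfer}).

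For the $0$-connection I would put $\alpha=0$ in the four formulas: the two components whose differentiated pair is mixed vanish identically, leaving $\Gamma^{(0)}_{ij,\bar k}$ and its conjugate $\Gamma^{(0)}_{\bar i\bar j,k}=(\Gamma^{(0)}_{ij,\bar k})^{*}$. To obtain (\ref{connection_potential}) I would differentiate the K\"ahler potential (\ref{kahler_potential_Hardy}): since $\partial_{\bar k}\log h=0$ the operator $\partial_{\bar k}$ acts only on $\overline{\log h}$, and then $\partial_i\partial_j$ acts only on $\log h$, so $\partial_i\partial_j\partial_{\bar k}\mathcal{K}=\frac{1}{2\pi i}\oint(\partial_i\partial_j\log h)(\partial_k\log h)^{*}\frac{dz}{z}=\Gamma^{(0)}_{ij,\bar k}$; the gauge freedom $\mathcal{K}\mapsto\mathcal{K}+\phi(\xi)+\psi(\bar\xi)$ is annihilated by $\partial_i\partial_j\partial_{\bar k}$, so (\ref{connection_potential}) is well defined. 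Equivalently this is the Christoffel formula for the Levi--Civita connection with $g_{ij}=g_{\bar i\bar j}=0$ and the K\"ahler identity $\partial_i g_{j\bar k}=\partial_j g_{i\bar k}$.

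Finally, for the $\alpha$-duality I would invoke (\ref{alpha_connection_t}) to write $\Gamma^{(\alpha)}_{\mu\nu,\rho}=\Gamma^{(0)}_{\mu\nu,\rho}-\frac{\alpha}{2}T_{\mu\nu,\rho}$ and $\Gamma^{(-\alpha)}_{\mu\rho,\nu}=\Gamma^{(0)}_{\mu\rho,\nu}+\frac{\alpha}{2}T_{\mu\rho,\nu}$; adding these, using the total symmetry $T_{\mu\nu,\rho}=T_{\mu\rho,\nu}$ and metric compatibility $\Gamma^{(0)}_{\mu\nu,\rho}+\Gamma^{(0)}_{\mu\rho,\nu}=\partial_\mu g_{\nu\rho}$, gives $\partial_\mu g_{\nu\rho}=\Gamma^{(\alpha)}_{\mu\nu,\rho}+\Gamma^{(-\alpha)}_{\mu\rho,\nu}$, which is precisely the statement that $\nabla^{(\alpha)}$ and $\nabla^{(-\alpha)}$ are mutually dual with respect to $g$ \cite{Amari:1990}. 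The only real care needed throughout is bookkeeping: tracking which integrand terms drop by holomorphicity, pairing the $z$- and $\bar z$-expansions on $|z|=1$ so that $(\partial_k\log h)^{*}$ really is the anti-holomorphic derivative of $\log\bar h$, and checking that $\partial_i\partial_j\partial_{\bar k}$ kills the ambiguity in $\mathcal{K}$. There is no deeper obstacle, since the structural inputs, the $\alpha$-connection/$T$ split and the K\"ahler identities, are already in hand.
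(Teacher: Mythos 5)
Your proposal is correct and follows essentially the same route as the paper: substitute $\log S=\log h+\log\bar h$ into the spectral-density formulas for $\Gamma^{(\alpha)}$ and $T$ and sort the surviving terms by holomorphicity, pull $\partial_i\partial_j\partial_{\bar k}$ outside the contour integral to identify $\Gamma^{(0)}_{ij,\bar k}$ with the third derivative of $\mathcal{K}$, and verify $\partial_\mu g_{\nu\rho}=\Gamma^{(\alpha)}_{\mu\nu,\rho}+\Gamma^{(-\alpha)}_{\mu\rho,\nu}$. Your duality step is in fact slightly tidier than the paper's ``tedious'' index-by-index check, since you reduce it to metric compatibility of the Levi--Civita connection plus the total symmetry of $T$, but it is the same argument in substance.
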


\begin{proof}
After plugging Equation (\ref{sdf_tf}) into Equation (\ref{connection_sdf}), the
derivation of the $\alpha$-connection is straightforward by considering holomorphic and anti-holomorphic derivatives in the expression. The same procedure is applied to the derivation of the symmetric tensor $T$.

The 0-connection is also directly derived from the K\"ahler potential. The proof is as follows: 
\begin{align}
	\Gamma^{(0)}_{ij,\bar{k}}&=\frac{1}{2\pi i}\oint_{|z|=1}\big(\partial_i\partial_j \log{h(z;\boldsymbol{\xi})}\big)\big(\partial_k \log{h(z;\boldsymbol{\xi})}\big)^{*}\frac{dz}{z} \nonumber \\
	&=\partial_i\partial_j\partial_{\bar{k}}\Big(\frac{1}{2\pi i}\oint_{|z|=1}\big(\log{h(z;\boldsymbol{\xi})}\big)\big( \log{h(z;\boldsymbol{\xi})}\big)^{*}\frac{dz}{z}\Big) \nonumber \\
	&=\partial_i\partial_j\partial_{\bar{k}}\big(||\log h(z;\boldsymbol{\xi})||^2_{H^2}\big) \nonumber \\
	&=\partial_i\partial_j\partial_{\bar{k}}\mathcal{K}.\nonumber
\end{align}

To prove the $\alpha$-duality, we need to test the following relation:
\begin{equation}
	\partial_{\mu}g_{\nu\rho}=	\Gamma^{(\alpha)}_{\mu\nu,\rho}+\Gamma^{(-\alpha)}_{\mu\rho,\nu}\nonumber
\end{equation}
where the Greek letters run from $1,\cdots,n$, $\bar{1},\cdots,\bar{n}$. After tedious calculation, it is obvious that the above equation is satisfied regardless of combinations of the indices. Therefore, the $\alpha$-duality also exists on the K\"ahlerian information manifolds.
\end{proof}

The 0-connection and the symmetric tensor $T$ are expressed in terms of $\phi_r$ and $\bar{\phi}_r$,
\begin{align}
	\Gamma^{(0)}_{ij,\bar{k}}&=\sum_{r=0}^{\infty} \partial_i\partial_j\phi_r\partial_{\bar{k}}\bar{\phi}_r \nonumber \\
	\Gamma^{(0)}_{ij,k}&=\partial_i\partial_j\phi_0\partial_{k}\phi_0 \nonumber \\
	T_{ij,\bar{k}}&=2 \sum_{r,s=0}^{\infty}\partial_i \phi_r \partial_j\phi_s\partial_{\bar{k}}\bar{\phi}_{r+s} \nonumber\\
	T_{ij,k}&=2 \partial_i \phi_0 \partial_j\phi_0\partial_{k}\phi_{0}.\nonumber
\end{align}
With the degree of freedom that $\phi_0$ is a constant in the model parameters $\boldsymbol{\xi}$, the non-trivial components of the 0-connection and the symmetric tensor $T$ are $\Gamma^{(0)}_{ij,\bar{k}}$ and $T_{ij,\bar{k}}$, respectively. In this degree of freedom, the Hermitian condition on the metric tensor is obviously emergent, and it is also beneficial to check the $\alpha$-duality condition for non-vanishing components:
\begin{align}
	\partial_k g_{i\bar{j}}&=\Gamma^{(\alpha)}_{ki,\bar{j}}+\Gamma^{(-\alpha)}_{k\bar{j},i}\nonumber\\
	\partial_{\bar{k}} g_{i\bar{j}}&=\Gamma^{(\alpha)}_{\bar{k}i,\bar{j}}+\Gamma^{(-\alpha)}_{\bar{k}\bar{j},i}.\nonumber
\end{align}

	We can cross-check these formulae for the geometric objects of the linear system geometry with the well-known results on a K\"ahler manifold. First of all, the fact that the 0-connection is the Levi--Civita connection can be verified as follows: 
\begin{equation}
	\Gamma^{(0)}{}_{ij}^{k}=g^{k\bar{m}}\Gamma^{(0)}_{ij,\bar{m}}=g^{k\bar{m}}\partial_i\partial_j\partial_{\bar{m}}\mathcal{K}=g^{k\bar{m}}\partial_i g_{j\bar{m}}=\partial_i (\log{g_{m\bar{n}}})^{k}_{\phantom{1}j}=\Gamma_{ij}^{k} \nonumber
\end{equation}
where the last equality comes from the expression for the Levi--Civita connection on a K\"ahler manifold. This is well-matched to the Levi--Civita connection on a K\"ahler manifold.

In Riemannian geometry, the Riemann curvature tensor, corresponding to the 0-curvature tensor, is given by 
\begin{equation}
R^{\rho}_{\phantom{1}\sigma\mu\nu}=\partial_{\mu}\Gamma^{\rho}_{\nu\sigma}-\partial_{\nu}\Gamma^{\rho}_{\mu\sigma}+\Gamma^{\rho}_{\mu\lambda}\Gamma^{\lambda}_{\nu\sigma}-\Gamma^{\rho}_{\nu\lambda}\Gamma^{\lambda}_{\mu\sigma} 
\nonumber
\end{equation}
where the Greek letters can be any holomorphic and anti-holomorphic indices. Similar to a Hermitian manifold, the non-vanishing components of the 0-curvature tensor on a K\"ahler manifold are $R^{\rho}_{\sigma\bar{\mu}j}$ and its complex conjugate, {\em i.e.}, the components with three holomorphic indices and one anti-holomorphic index (and the complex conjugate component). The non-trivial components of the Riemann curvature tensor are represented by 
\begin{align}
	R^{(0)}{}^{l}_{\phantom{1}k\bar{i}j}&=\partial_{\bar{i}}\Gamma^{l}_{jk}-\partial_{j}\Gamma^{l}_{\bar{i}k}+\Gamma^{l}_{\bar{i}m}\Gamma^{m}_{jk}-\Gamma^{l}_{jm}\Gamma^{m}_{\bar{i}k} \nonumber \\
	&=\partial_{\bar{i}}\Gamma^{l}_{jk}=\partial_{\bar{i}}(g^{l\bar{m}}\partial_j\partial_l\partial_{\bar{m}}\mathcal{K})=\big(R^{(0)}{}^{\bar{l}}_{\phantom{1}\bar{k}i\bar{j}}\big)^{*} \nonumber
\end{align}
because the 0-connection components with the mixed indices are vanishing.

Taking index contraction on holomorphic upper and lower indices in the Riemann curvature tensor, the 0-Ricci tensor is found as
\begin{align} 
	\label{Ricci_tensor_kahler}
	R^{(0)}_{i\bar{j}}&=R^{(0)}{}^{k}_{\phantom{1}k i\bar{j}}=-R^{(0)}{}^{k}_{\phantom{1}k\bar{j}i} \nonumber \\
	&=-\partial_{\bar{j}}\partial_{i} (\log{g_{m\bar{n}}})^{k}_{\phantom{1}k}=-\partial_{\bar{j}}\partial_{i}tr(\log{g_{m\bar{n}}}) \nonumber \\
	&=-\partial_{\bar{j}}\partial_{i} \log{\mathcal{G}}
\end{align}
where $\mathcal{G}$ is the determinant of the metric tensor. This result is consistent with the expression of the Ricci tensor on a K\"ahler manifold. It is also straightforward to obtain the 0-scalar curvature by contracting the indices in the 0-Ricci tensor:
\begin{equation}
	R^{(0)}=g^{i\bar{j}}R^{(0)}_{i\bar{j}}=-\frac{1}{2}\Delta \log{\mathcal{G}}\nonumber
\end{equation}
where $\Delta$ is the Laplace--Beltrami operator on the K\"ahler manifold.

	The $\alpha$-generalization of the curvature tensor, the Ricci tensor and the scalar curvature is based on the $\alpha$-connection, Equation (\ref{alpha_connection_t}). The $\alpha$-curvature tensor is given by 
\begin{align}
	R^{(\alpha)}{}^{l}_{\phantom{1}k\bar{i}j}&=\partial_{\bar{i}}\Gamma^{(\alpha)}{}^{l}_{jk}=\partial_{\bar{i}}\Big(\Gamma^{(0)}{}^{l}_{jk}-\frac{\alpha}{2} g^{l\bar{m}}T_{jk,\bar{m}}\Big) \nonumber \\
	&=R^{(0)}{}^{l}_{\phantom{1}k\bar{i}j}-\frac{\alpha}{2} \partial_{\bar{i}}\Big(g^{l\bar{m}}T_{jk,\bar{m}}\Big). \nonumber
\end{align}
The $\alpha$-Ricci tensor and the $\alpha$-scalar curvature are obtained as 
\begin{align}
	R^{(\alpha)}_{i\bar{j}}&=R^{(\alpha)}{}^{k}_{\phantom{1}k i\bar{j}}=-R^{(\alpha)}{}^{k}_{\phantom{1}k\bar{j}i} \nonumber \\
	&=-\partial_{\bar{j}}\Big(\Gamma^{(0)}{}^{k}_{ik}-\frac{\alpha}{2} g^{k\bar{l}}T_{ik,\bar{l}}\Big) \nonumber \\
	&=R^{(0)}_{i\bar{j}}+\frac{\alpha}{2}\partial_{\bar{j}}T^{k}_{ik} \nonumber \\
	R^{(\alpha)}&=R^{(0)}+\frac{\alpha}{2} g^{i\bar{j}}\partial_{\bar{j}}T^{\rho}_{i\rho}. \nonumber
\end{align}
It is noteworthy that the $\alpha$-curvature tensor, the $\alpha$-Ricci tensor and the $\alpha$-scalar curvature on a K\"ahler manifold have the linear corrections in $\alpha$ comparing with the quadratic corrections in $\alpha$ on non-K\"ahler manifolds.
	A submanifold of a K\"ahler manifold is also a K\"ahler manifold. When a submanifold of dimension $m$ exists, the transfer function of a linear system can be decomposed into two parts: 
\begin{equation}
	h(z;\boldsymbol{\xi})=h_{\parallel}(z;\xi^1,\cdots,\xi^{m})h_{\perp}(z;\xi^{m+1},\cdots,\xi^{n}) \nonumber
\end{equation}
where $h_{\parallel}$ is the transfer function on the submanifold and $h_{\perp}$ is the transfer function orthogonal to the submanifold. When it is plugged into Equation (\ref{kahler_potential_Hardy}), the K\"ahler potential of the geometry is decomposed into three terms as follows:
\begin{align}
	\mathcal{K}&=\frac{1}{2\pi i}\oint_{|z|=1}(\log{h_{\parallel}}+\log{h_{\perp}})(\log{h_{\parallel}}+\log{h_{\perp}})^{*}\frac{dz}{z} \nonumber \\
	&=\frac{1}{2\pi i}\oint_{|z|=1}\log{h_{\parallel}}\log{\bar{h}_{\parallel}}\frac{dz}{z}+\frac{1}{2\pi i}\oint_{|z|=1}\log{h_{\perp}}\log{\bar{h}_{\perp}}\frac{dz}{z} +\frac{1}{2\pi i}\oint_{|z|=1}\log{h_{\parallel}}\log{\bar{h}_{\perp}}\frac{dz}{z}+(c.c.) \nonumber \\
	&=\mathcal{K}_{\parallel}+\mathcal{K}_{\perp}+\mathcal{K}_{\times} \nonumber
\end{align}
where $\mathcal{K}_{\parallel}$ contains the coordinates from the submanifold, $\mathcal{K}_{\times}$ is for the cross-terms and $\mathcal{K}_{\perp}$ is orthogonal to the submanifold.

It is obvious that each part in the decomposition of the K\"ahler potential provides the metric tensors for submanifolds,
\begin{align}
g_{M\bar{N}}&=\partial_{M}\partial_{\bar{N}}\mathcal{K}_{\parallel} \nonumber\\
g_{M\bar{n}}&=\partial_{M}\partial_{\bar{n}}\mathcal{K}_{\times} \nonumber \\
g_{m\bar{n}}&=\partial_{m}\partial_{\bar{n}}\mathcal{K}_{\perp} \nonumber
\end{align}
where an uppercase index is for the coordinates on the submanifold and a lowercase index is for the coordinates orthogonal to the submanifold. As we already know, the induced metric tensor for the submanifold is derived from $\mathcal{K}_{\parallel}$, the K\"ahler potential of the submanifold. Based on this decomposition, it is also possible to use $\mathcal{K}$ as the K\"ahler potential of the submanifold, because it endows the same metric with $\mathcal{K}_{\parallel}$. However, the Riemann curvature tensor and the Ricci tensors include the mixing terms from embedding in the ambient manifold, because the inverse metric tensor contains the orthogonal coordinates by the Schur complement. In statistical inference, connections, tensors and scalar curvature play important roles. If those corrections are negligible, dimensional reduction to the submanifolds is meaningful from the viewpoints not only of K\"ahler geometry, but also of statistical inference.

The benefits of introducing a K\"ahler manifold as an information manifold are as follows. First of all, on a K\"aher manifold, the calculation of geometric objects, such as the metric tensor, the $\alpha$-connection and the Ricci tensor, is simplified by using the K\"ahler potential. For example, the 0-connection on a non-K\"ahler
manifold is given by 
\begin{equation}
	\Gamma^{(0)}_{ij,k}=\frac{1}{2}(\partial_i g_{kj}+\partial_j g_{ik}-\partial_k g_{ij}) \nonumber
\end{equation}
demanding three-times more calculation steps than the K\"ahler case, Equation (\ref{connection_potential}). Additionally, the Ricci tensor on a K\"ahler manifold is directly derived from the determinant of the metric tensor. Meanwhile, the Ricci tensor on a non-K\"ahler manifold needs more procedures. In the beginning, the connection should be calculated from the metric tensor. Additionally, then, the Riemann curvature is obtained after taking the derivatives on the connection and considering quadratic terms of the connection. Finally, the Ricci tensor on the non-K\"ahler manifold is found by the index contraction on the curvature tensor indices.

Secondly, $\alpha$-corrections on the Riemann curvature tensor, the Ricci tensor and the scalar curvature on the K\"ahler manifold are linear in $\alpha$. Meanwhile, there exist the quadratic $\alpha$-corrections in non-K\"ahler cases. The $\alpha$-linearity makes it much easier to understand the properties of $\alpha$-family.

Moreover, submanifolds in K\"ahler geometry are also K\"ahler manifolds. When a statistical model is reducible to its lower-dimensional models, the information geometry of the reduced statistical model is a submanifold of the geometry. If the ambient manifold is K\"ahler, the dimensional reduction also provides a K\"ahler manifold as the information geometry of the reduced model, and the submanifold is equipped with all of the properties of the K\"ahler manifold.

Lastly, finding the superharmonic priors suggested by Komaki \cite{Komaki:2006} is more straightforward in the K\"ahler setup, because the Laplace--Beltrami operator on a K\"ahler manifold is of the more simplified form compared to that in non-K\"ahler cases. For a differentiable function $\psi$, the Laplace--Beltrami operator on a K\"ahler manifold is given by
\begin{equation}
	\Delta \psi =2g^{i\bar{j}}\partial_{i}\partial_{\bar{j}} \psi
\end{equation}
comparing with the Laplace--Beltrami operator on a non-K\"ahler manifold: 
\begin{equation}
	\Delta \psi =\frac{1}{\sqrt{\mathcal{G}}}\partial _{i}\big(\sqrt{\mathcal{G}}g^{ij}\partial_{j}\psi \big)
\end{equation}
where $\mathcal{G}$ is the determinant of the metric tensor. On a K\"ahler manifold, the partial derivatives only act on the superharmonic prior functions. Meanwhile, the contributions from the derivatives acting on $\mathcal{G}$ and $g^{ij}$ should be considered in the non-K\"ahler cases. This computational redundancy is not on the K\"ahler manifold.

\section{Example: AR, MA and ARMA Models}
\label{sec_kahler_example}
 
	In the previous section, we show that the information geometry of a signal filter is a K\"ahler manifold. From the viewpoint of signal processing, time series models can be interpreted as a signal filter that transforms a randomized input $x(z)$ to an output $y(z)$. The geometry of a time series model can also be found by using the results in the previous section. In particular, we cover the AR, the MA and the ARMA models as examples.

First of all, the transfer functions of these time series models need to be identified. The transfer functions of the AR, the MA and the ARMA models with model parameters $\boldsymbol{\xi}=(\sigma, \xi^1,\cdots, \xi^n)$ are represented by 
\begin{equation}
	h(z;\boldsymbol{\xi})=\frac{\sigma^2}{2\pi}\prod_{i=1}^{n} (1-\xi^i z^{-1})^{c_i} 
\nonumber
\end{equation}
where $c_i=-1$ if $\xi^i$ is an AR pole and $c_i=1$ if $\xi^i$ is an MA root.

The ARMA models can be considered as the fraction of two AR models or two MA models. By Lemma \ref{lem_sdf_inv}, the correspondence between the $\alpha$-duality and the reciprocality of transfer functions is also valid for the ARMA($p,q$) models. For example, the ARMA($p,q$) model with $\alpha$-connection is $\alpha$-dual to the ARMA($q,p$) model with the $(-\alpha)$-connection under the reciprocality of the transfer function. Simply speaking, the AR model and the MA model are exchangeable by Lemma \ref{lem_sdf_inv}. The correspondence is given as follows: 
\begin{align}
\text{ARMA}(p,q) &\leftrightarrow \text{ARMA}(q,p) \nonumber \\
\text{poles} &\leftrightarrow \text{zeros} \nonumber \\
\text{zeros} &\leftrightarrow \text{poles} \nonumber \\
\sigma/\sqrt{2\pi} &\leftrightarrow \sqrt{2\pi}/\sigma \nonumber \\
\alpha &\leftrightarrow -\alpha \nonumber \\
\Gamma^{(\alpha)} &\leftrightarrow \Gamma^{(-\alpha)} \nonumber \\
D^{(\alpha)}(h^{(0)}||h)&\leftrightarrow D^{(-\alpha)}(h^{(0)}||h) \nonumber
\end{align}
where $h^{(0)}$ is the unit transfer function of an all-pass filter.

\subsection{K\"ahlerian Information Geometry of ARMA($p,q$) Models}

The ARMA($p,q$) model is the ($p$+$q$+1)-dimensional model with $\boldsymbol{\xi}=(\sigma, \xi^1,\cdots,\xi^{p+q})$, and the time series model is characterized by its transfer function: 
\begin{equation}
	h(z;\boldsymbol{\xi})=\frac{\sigma^2}{2\pi}\frac{(1-\xi^{p+1} z^{-1})(1-\xi^{p+2} z^{-1})\cdots(1-\xi^{p+q} z^{-1})}{(1-\xi^1 z^{-1})(1-\xi^2 z^{-1})\cdots(1-\xi^p z^{-1})} \nonumber
\end{equation}
where $\sigma$ is the gain and $\xi^i$ is a pole with the condition of $|\xi^i|<1$. The logarithmic transfer function of the ARMA($p,q$) model is given by
\begin{equation}
	\log{h(z;\boldsymbol{\xi})}=\log{\frac{\sigma^2}{2\pi}}+\sum_{i=1}^{p+q} c_i\log{(1-\xi^i z^{-1})} \nonumber
\end{equation}
and it is easy to verify that $f_0 a_0=\sigma^2/2\pi$. 

According to Theorem \ref{thm_kahler_signal}, the information geometry of the ARMA model is a K\"ahler manifold because of stability, minimum phase and the finite complex cepstrum norm of the ARMA filter. By using Theorem \ref{thm_tf_fa}, the Hermitian condition on the metric tensor is explicitly checked on the submanifold of the ARMA model, where $\sigma$ is a constant. In addition to that, this submanifold is also a K\"ahler manifold, because a submanifold of a K\"ahler manifold is also K\"ahler. Since it is possible to gauge $\sigma$ by normalizing the amplitude of an input signal, the $\sigma$-coordinate can be considered as the denormalization coordinate \cite{Amari:2000}. Similar to the non-complexified ARMA models \cite{Ravishanker:1990p5895}, $g_{0i}$ for all non-zero $i$ vanish by direct calculation using Equation (\ref{metric_sdf}). Considering these facts, we work only with the submanifolds of a constant gain.

As mentioned, the K\"aher potential is crucial for the K\"ahler manifolds and defined as the square of the Hardy norm of the logarithmic transfer function, equivalently the square of the complex cepstrum norm, Equation (\ref{kahler_potential_Hardy}). For the ARMA($p,q$) model, the K\"ahler potential is given by 
\begin{equation}
	\mathcal{K}=\sum_{r=1}^{\infty}\frac{1}{r^2}\Big|\sum_{i=1}^{p+q}c_i(\xi^i)^r\Big|^2 \nonumber
\end{equation}
Since the metric tensor is simply derived from taking the partial derivatives on the K\"ahler potential, Equation (\ref{metric_potential}), the metric tensor of the ARMA($p,q$) model is represented as
\begin{equation}
	g_{i\bar{j}}=\frac{c_i c_j}{1-\xi^i\bar{\xi}^j}.\nonumber
\end{equation}
where other fully holomorphic- and fully anti-holomorphic-indexed components are all zero. It is easily verified that if $c_i$ and $c_j$ are both from the AR or the MA models, $c_i$ and $c_j$ exhibit the same signature, which imposes that the AR($p$)- and the MA($q$)-submanifolds of the ARMA($p,q$) model have the same metric tensors with the AR($p$) and the MA($q$) models, respectively. If two indices are from the different models, there exists only the sign difference in the metric tensor. The metric tensor of the geometry is of a similar form as the metric tensor in Ravishanker's work on the ARMA geometry \cite{Ravishanker:1990p5895}.

By considering the Schur complement, the inverse metric tensor can be deduced from the inverse metric tensor of the AR($p$+$q$) model. The inverse metric tensor of the geometry is represented by 
\begin{equation}
	g^{i\bar{j}}=c_ic_j\frac{(1-\xi^i\bar{\xi}^j)\prod_{k\ne i}(1-\xi^k\bar{\xi}^j)\prod_{k\ne j}(1-\xi^i\bar{\xi}^k)}{\prod_{k\ne i}(\xi^k-\xi^i)\prod_{k\ne j}(\bar{\xi}^k-\bar{\xi}^j)} \nonumber
\end{equation}
and the only difference with the AR case is the signature $c_ic_j$ in the AR-MA mixed components. With the sign difference in the metric tensor components with the AR-MA mixed indices, the determinant of the metric tensor can be calculated with the aid of the Schur complement. The determinant of the metric tensor is found as 
\begin{equation}
	\mathcal{G}=\det g_{i\bar{j}}=\frac{\prod_{1\le j<k\le n}|\xi^k-\xi^j|^2}{\prod_{j,k}(1-\xi^j\bar{\xi}^k)}.\nonumber
\end{equation}

The 0-connection and the symmetric tensor $T$ for the K\"ahler-ARMA model can be found from the results in the previous section. The non-trivial 0-connection components are calculated from Equation~(\ref{connection_potential}): 
\begin{equation}
\Gamma_{ij,\bar{k}}^{(0)}=\frac{c_jc_k\delta_{ij}\bar{\xi}^{k}}{(1-\xi^j\bar{\xi}^{k})^2} \nonumber
\end{equation}
and the non-zero components of the symmetric tensor $T$ are given by Equation (\ref{symt_transfer}): 
\begin{equation}
T_{ij,\bar{k}}=\frac{2c_ic_jc_k\bar{\xi}^{k}}{(1-\xi^i\bar{\xi}^{k})(1-\xi^j\bar{\xi}^{k})}. \nonumber
\end{equation}
Based on the above expressions, the $\alpha$-connection is easily obtained from Equation (\ref{alpha_connection_t}).

The 0-Ricci tensor of the ARMA geometry is represented by Equation (\ref{Ricci_tensor_kahler}):
\begin{equation}
	R^{(0)}_{i\bar{j}}=-\frac{1}{(1-\xi^i\bar{\xi}^j)^2} \nonumber
\end{equation}
and it is noteworthy that the Ricci tensor is not dependent on $c_i$. The 0-scalar curvature is calculated from the 0-Ricci tensor by index contraction:
\begin{equation}
R^{(0)}=-\sum_{i,j}\frac{c_ic_j\prod_{k\ne i}(1-\xi^k\bar{\xi}^j)\prod_{k\ne j}(1-\xi^i\bar{\xi}^k)}{(1-\xi^i\bar{\xi}^j)\prod_{k\ne i}(\xi^k-\xi^i)\prod_{k\ne j}(\bar{\xi}^k-\bar{\xi}^j)} \nonumber
\end{equation}
where $c_i, c_j$ are from the inverse metric tensor of the ARMA model.

It is straightforward to derive the $\alpha$-generalization of the Riemann curvature tensor, the Ricci tensor and the scalar curvature by using the results in Section \ref{sec_kahler_theory}.

\subsection{Superharmonic Priors for K\"ahler-ARMA($p,q$) Models}

As mentioned before, the Laplace--Beltrami operator on a K\"ahler manifold is of a much simpler form than that on a non-K\"ahler manifold. The simplified Laplace--Beltrami operator of the geometry makes finding superharmonic priors easier. Although it is also valid in any arbitrary dimension, let us confine ourselves to the ARMA(1,1) model as a simplification. For the ARMA($1,1$) model, the metric tensor is expressed with
\begin{equation}
g_{i\bar{j}}=\Big(
\begin{array}{cc}
\frac{1}{1-|\xi^1|^2} & -\frac{1}{1-\xi^1\bar{\xi}^2} \\ 
-\frac{1}{1-\xi^2\bar{\xi}^1} & \frac{1}{1-|\xi^2|^2}
\end{array}
\Big). \nonumber
\end{equation}
It is trivial to show that $\psi_1=(1-|\xi^1|^2)+(1-|\xi^2|^2)$ and $\psi_2=(1-|\xi^1|^2)(1-|\xi^2|^2)\nonumber$ are superharmonic prior functions. 

In order to compare with the literature on superharmonic priors for the non-K\"ahlerian AR \mbox{models~\cite{Tanaka:2006, Tanaka:2009}}, let us consider the K\"ahler-AR($p$) models. For $p=2$, the metric tensor is given by 
\begin{align}
	g_{i\bar{j}}=\Big( \begin{array}{cc} 
	\frac{1}{1-|\xi ^{1}|^{2}} & \frac{1}{1-\xi ^{1}\bar{\xi}^{2}} \\ 
	\frac{1}{1-\xi ^{2}\bar{\xi}^{1}} & \frac{1}{1-|\xi ^{2}|^{2}}
	\end{array}\Big)\nonumber.
\end{align}

With the Laplace--Beltrami operator on a K\"ahler manifold, it is obvious that $(1-|\xi^k|^2)$ for $k=1,\cdots,p$ is a superharmonic function in arbitrary $p$-dimensional AR geometry. The proof for superharmonicity is as follows:
\begin{align}
	\Delta (1-|\xi^k|^2)&=2g^{i\bar{j}}\partial_i\partial_{\bar{j}} (1-|\xi^k|^2) \nonumber \\
	&=-2g^{i\bar{j}}\delta_{i,k}\delta_{j,k}=-2g^{k\bar{k}}<0 \nonumber
\end{align}
because the diagonal components of the inverse metric tensor are all positive. By additivity, the sum of these prior functions, $\sum_{k=1}^{n}(1-|\xi^k|^2)$, are also superharmonic. Obviously, $\psi_1=(1-|\xi^1|^2)+(1-|\xi^2|^2)$ is a superharmonic prior function in the two-dimensional case.

Another superharmonic prior function for the AR($2$) model is $\psi_2=(1-|\xi^1|^2)(1-|\xi^2|^2)$. The Laplace--Beltrami operator acting on $\psi_2$ is represented by 
\begin{equation}
	\Big(\frac{\Delta \psi_2}{\psi_2}\Big)=-\frac{2(2-\xi^1\bar{\xi}^2-\xi^2\bar{\xi}^1)}{|\xi^1-\xi^2|^2} \nonumber
\end{equation}
and it is simply verified that $\Big(\frac{\Delta \psi_2}{\psi_2}\Big)<0$, because $2-\xi^1\bar{\xi}^2-\xi^2\bar{\xi}^1>0$. In addition to that, since $\psi_2$ is positive, $\psi_2=(1-|\xi^1|^2)(1-|\xi^2|^2)$ is a superharmonic prior function.

Additionally, it is found that $\psi_3=(1-\xi^1\bar{\xi^2})(1-\xi^2\bar{\xi^1})(1-|\xi^1|^2)(1-|\xi^2|^2)$ is also a superharmonic prior function. The Laplace--Beltrami operator acting on this prior function gives 
\begin{equation}
(\frac{\Delta \psi_3}{\psi_3})=-\frac{6}{\mathcal{G}}\frac{|\xi^1-\xi^2|^2}{(1-\xi^1\bar{\xi^2})(1-\xi^2\bar{\xi^1})(1-|\xi^1|^2)(1-|\xi^2|^2)}=-6 \nonumber
\end{equation}
and it is straightforward that $\psi_3$ is superharmonic, because $\psi_3$ is positive. This prior function is similar to the prior function found in the literature \cite{Tanaka:2006, Tanaka:2009}. If the prior function is represented in the complexified coordinates, the prior function is $(1-|\xi^1|^2)$, because the two coordinates in his paper are complex conjugate to each other.

	To obtain superharmonic priors, the superharmonic prior functions found above are multiplied by the Jeffreys prior, which is the volume form of the information manifold. After that, the superharmonic priors outperform the Jeffreys prior \cite{Komaki:2006}.

\section{Conclusion}
\label{sec_kahler_conclusion} 

In this paper, we prove that the information geometry of a signal filter with a finite complex cepstrum norm is a K\"ahler manifold. The conditions on the transfer function of the filter make the Hermitian structure explicit. The first condition on the transfer function for the K\"ahlerian information manifold is whether or not multiplication between the zero-th degree terms in $z$ of the unilateral part and the analytic part in the transfer function decomposition is a constant. The second condition is whether or not the coefficient of the highest degree in $z$ is a constant in the model parameters. These two conditions are equivalent to each other for some transfer functions.

It is also found that the square of the Hardy norm of a logarithmic transfer function is the K\"ahler potential of the information geometry. It is also known as the unweighted complex cepstrum norm of a linear system. Using the K\"ahler potential, it is easy to derive the geometric objects, such as the metric tensor, the $\alpha$-connection and the Ricci tensor. Additionally, the K\"ahler potential is a constant term in $\alpha$ of the $\alpha$-divergence, {\em i.e.}, it is related to the 0-divergence.

The K\"ahlerian information geometry for signal processing is not only mathematically interesting, but also computationally practical. Contrary to non-K\"ahler manifolds where tedious and lengthy calculation is needed in order to obtain the tensors, it is relatively easier to calculate the metric tensor, the connection and the Ricci tensor on a K\"ahler manifold. Taking derivatives on the K\"ahler potential provides the metric tensor and the connection on a K\"ahler manifold. The Ricci tensor is obtained from the determinant of the metric tensor. Moreover, $\alpha$-generalization on the curvature tensor, the Ricci tensor and the scalar curvature is linear in $\alpha$. Meanwhile, there exist the non-linear corrections in the non-K\"ahler cases. Additionally, since the Laplace--Beltrami operator in K\"ahler geometry is of the simpler form, it is more straightforward to find superharmonic priors.

The information geometries of the AR, the MA and the ARMA models, the most well-known time series models, are the K\"ahler manifolds. The metric tensors, the connections and the divergences of the linear system geometries are derived from the the K\"ahler potentials with simplified calculation. In addition to that, the superharmonic priors for those models are found with much less computational efforts.

\section*{Acknowledgments}
We are thankful to Robert J. Frey and Michael Tiano for useful discussions.

\end{document}